\title {An inductive approach to constructing universal cycles on the $k$-subsets of $[n]$}  
\author{Yevgeniy Rudoy \\ Johns Hopkins University}
\newtheoremstyle{nameStyle}
  {1em}{\topsep}
  {\sffamily }{}
  {\bf \sffamily }{:\mbox{$ $}}
  {.5em} {\thmname{\@ifempty{#3}{#1}\@ifnotempty{#3}{#3}}}
\theoremstyle{nameStyle}
\newtheorem*{named}{This Theorem Should Have a name!}
\newtheoremstyle{sf}
  {1em}{\topsep}
  {\sffamily }{}
  {\bf \sffamily }{:\mbox{$ $}}
  {.5em}{}
\newtheoremstyle{remark}
  {1em}{1em}
  {\sffamily }{}
  {\bf \sffamily }{:\mbox{$ $}}
  {.5em}{}
\newtheoremstyle{note}
  {\topsep}{\topsep}
  {\scshape}{}
  {\scshape}{:\mbox{$ $}}
  {.5em}{}
\theoremstyle{sf}
\newtheorem{lem}{Lemma}[section]
\newtheorem{theo}[lem]{Theorem}
\newtheorem{cor}{Corollary}[lem]
\newtheorem*{conjecture}{Conjecture}
\theoremstyle{remark}
\newtheorem*{rem}{Remark}
\theoremstyle{note}
\newtheorem*{note}{NOTE}
\newcommand{\SUB}{\textnormal{SUB}} 
\newcommand{\WEAVE}{\textnormal{WEAVE}}
\newcommand{\Cbar}{\hspace{1pt}\overline {\hspace{-1pt}  \raisebox{0pt}[7pt]{$C$} \hspace{-1pt}} \hspace{1pt}}
\newcommand{\Deltabar}{\hspace{1pt}\overline {\hspace{-1pt}  \raisebox{0pt}[7.5pt]{$\Delta$} \hspace{-1pt}} \hspace{1pt}}
\newcommand{\lcm}{{\textnormal{lcm}}}
\newcommand{\osum}[1]{{\bigoplus}^{\hspace{-2pt}k}}
\begin{document}
\maketitle
\begin{abstract}
In this paper, we introduce a method of constructing universal cycles on sets by taking ``sums'' and ``products'' of smaller cycles. We demonstrate this new approach by proving that if there exist universal cycles on the 4-subsets of [18] and the 4-subsets of [26], then for any integer $n\ge18$ equivalent to $2 \pmod{8}$, there exists a universal cycle on the 4-subsets of [n].
\end{abstract}

\section{Introduction}

Consider the binary sequence $00011101.$
If we regard this sequence as a cycle, each of the 8 binary triples appears exactly once as a block of consecutive symbols in our sequence. In 1946, de Bruijn \cite{B} showed that for any $n$ and $k$, there exists an $n$-ary sequence in which each $n$-ary $k$-tuple appears exactly once. Such sequences are now known as \emph{de Bruijn cycles}.

In 1992, Chung, Diaconis, and Graham \cite{C} explored various generalizations of de Bruijn cycles, which they called \emph{universal cycles} or \emph{ucycles}. One such generalization was to universal cycles on 
$\left[\begin{smallmatrix} n \\ k \end{smallmatrix} \right]$\footnote{Here, $\left[\begin{smallmatrix} n \\ k \end{smallmatrix} \right]$ denotes the set of all $k$-element subsets of $[n] = \{0,1,\ldots, n-1\}$}: $n$-ary sequences in which each block of $k$ consecutive symbols consists of $k$ different symbols, and any set of $k$ symbols chosen from $[n] = \{0,1,\ldots, n-1\}$ is represented exactly once as a set of $k$ consecutive symbols in the sequence.

Chung, Diaconis, and Graham \cite{C} proved that for universal cycles on $\left[\begin{smallmatrix} n \\ k \end{smallmatrix} \right]$ to exist, it is necessary for $k$ to divide $ \binom{n-1}{k-1}$, a result reproduced below:

\begin{lem}[Chung, Diaconis, and Graham]
$k \big| \binom{n-1}{k-1}$ is a necessary condition for the existence of a universal cycle on $\left[\begin{smallmatrix} n \\ k \end{smallmatrix} \right]$.
\end{lem}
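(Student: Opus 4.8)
The plan is to prove this by a direct double-counting argument: fix a single symbol and count, in two ways, the windows of $k$ consecutive entries in which that symbol appears. Write the universal cycle as a cyclic sequence $a_0 a_1 \cdots a_{N-1}$ with indices read modulo $N$. First I would record that, since every window of $k$ consecutive entries represents a distinct $k$-subset of $[n]$ and each of the $\binom{n}{k}$ such subsets is represented, the cycle has length exactly $N = \binom{n}{k}$; in particular there are exactly $N$ windows, one beginning at each position.

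Next, fix the symbol $0 \in [n]$ and let $m$ be the number of indices $i$ with $a_i = 0$. For each such $i$, the windows that contain position $i$ are precisely the $k$ windows beginning at positions $i-k+1, i-k+2, \ldots, i$, and every one of these $k$ windows contains the symbol $0$. Conversely --- and this is the step that genuinely uses the defining property of a universal cycle on $k$-sets --- any window containing $0$ contains it exactly once, because the $k$ entries of a window are pairwise distinct (they form a $k$-element set). Hence each window containing $0$ is associated with a unique occurrence of $0$, namely the single position inside it at which $0$ sits.

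Combining these observations, the set of windows containing $0$ is partitioned, according to which occurrence of $0$ they contain, into $m$ classes each of size exactly $k$ --- here one checks the minor point that, since $N = \binom{n}{k} \ge k$, the positions $i-k+1,\ldots,i$ are genuinely $k$ distinct positions, so each class really has $k$ members and they are distinct windows. On the other hand, the windows containing $0$ are exactly the (unique) appearances of the $k$-subsets of $[n]$ that contain $0$, of which there are $\binom{n-1}{k-1}$. Therefore $\binom{n-1}{k-1} = mk$, and so $k \mid \binom{n-1}{k-1}$.

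The argument is short, so I do not expect a serious obstacle; the one place requiring care is the bookkeeping that makes the partition exact --- namely that a window containing $0$ contains precisely one copy of $0$ (no overcounting) and that every one of the $k$ windows straddling a fixed occurrence of $0$ genuinely contains $0$ and is a legitimate, distinct window of the cycle. Both follow at once from the facts that the entries of any window are distinct and that the cycle has length $\binom{n}{k}$.
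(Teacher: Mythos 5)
Your proposal is correct and is essentially the paper's own argument: fix a symbol, note that each occurrence lies in exactly $k$ windows, that no window contains the symbol twice, and that the windows containing it correspond bijectively to the $\binom{n-1}{k-1}$ $k$-subsets containing it, giving $k \mid \binom{n-1}{k-1}$. Your extra bookkeeping (the length $N=\binom{n}{k}$ and the distinctness of the $k$ windows) only makes explicit what the paper leaves implicit.
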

\begin{proof}
Let $C$ be a universal cycle on $\left[\begin{smallmatrix} n \\ k \end{smallmatrix} \right]$, and let $s$ be any symbol in $[n]$. For each occurrence of $s$ in $C$, there will be exactly $k$ different blocks of size $k$ which contain that occurrence of $s$. Since no block can contain multiple occurrences of $s$, the total number of blocks containing $s$ must be $k$ times the number of occurrences of $s$.
As there is exactly one such block for each set of $k$ symbols in $[n]$ containing $s$, $k$ must divide $\binom{n-1}{k-1}$.
\end{proof}

Chung, Diaconis, and Graham also conjectured that for any $k$, provided that $n$ was sufficiently large, this necessary condition was also sufficient. In other words,

\begin{conjecture} [Chung, Diaconis, and Graham]
 Uycles exist for $\left[\begin{smallmatrix} n \\ k \end{smallmatrix} \right]$ provided that $k$ divides $\binom{n-1}{k-1}$ and $n \ge n_0(k)$.
\end{conjecture}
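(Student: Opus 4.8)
The plan is to fix $k$ and work with the set $V_k=\{\,n\ge k : k \mid \binom{n-1}{k-1}\,\}$ of admissible orders. A Kummer's-theorem computation of $v_p\!\binom{n-1}{k-1}$ for each prime $p\mid k$ shows that the divisibility condition depends only on $n$ modulo a fixed power of the $\lcm$ of those primes, so $V_k$ is a finite union of arithmetic progressions $a_j+d_j\mathbb Z$ (for prime $k$ it is simply $\{n : n\not\equiv 0\pmod k\}$). Within each progression I would build a ucycle on the $k$-subsets of $[n]$ inductively from a ucycle on the $k$-subsets of $[n-d_j]$. The engine is the pair of operations this paper develops: a ``product'' $\WEAVE$ that interleaves ucycles living on disjoint alphabets, and a ``sum'' $\SUB$ that substitutes blocks of new symbols into an existing cycle. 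Partition the $k$-subsets of $[n]$ by the number $j$ of elements they take from the $d_j$ newly-added symbols: the $j=0$ class is exactly the $k$-subsets of $[n-d_j]$, handled recursively, while for $1\le j\le k$ the class of ``$j$ new, $k-j$ old'' sets has product structure and should be covered by a $\WEAVE$ of a ucycle on the $(k-j)$-subsets of $[n-d_j]$ with a de~Bruijn-type gadget on the $j$ new symbols, all spliced together by $\SUB$.

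First I would make the two operations precise and prove a \emph{gluing lemma}: if the input cycles are ucycles and suitable compatibility conditions on their lengths and on their ``transition symbols'' hold, then the output is again a ucycle. The content is a double-counting step showing each $k$-subset of $[n]$ is covered exactly once, together with a connectivity step — an Eulerian- or Hamiltonian-type argument on an auxiliary transition graph — showing the pieces can be concatenated into a single cycle rather than several. The lemmas on $\SUB$ and $\WEAVE$ already in this paper would be the $k=4$, single-progression instances of this. Second, for each progression $a_j+d_j\mathbb Z$ I would verify by hand or by computer search a finite list of base cases $n=a_j, a_j+d_j,\dots$, up to the first value from which the gluing lemma applies unconditionally; these play the role that $n=18$ and $n=26$ play here for $k=4$ and $n\equiv 2\pmod 8$. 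Third, set $n_0(k)$ to the largest such base value over all progressions, and run the induction: an $n\ge n_0(k)$ in $V_k$ lies in some progression, $n-d_j$ is again in $V_k$, and the gluing lemma converts a ucycle on the $k$-subsets of $[n-d_j]$ (available by the induction hypothesis or as a pre-established base case) together with the auxiliary gadgets into a ucycle on the $k$-subsets of $[n]$.

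The hard part is the gluing lemma in full generality. Even for $k=4$ this paper pushes it through for only one residue class and still needs two base cases supplied externally, which signals that the length- and transition-compatibility conditions are delicate: the interleaved product cycles must have lengths that are \emph{simultaneously} compatible with the substitution pattern, a system of divisibility and connectivity constraints whose solvability is far from clear as $k$ grows and more mixed classes and more primes dividing $k$ enter. A secondary, more logistical obstacle is that for each $k$ one must actually exhibit base cases in every progression, and both the number of progressions and the smallest admissible $n$ grow with $k$, so a uniform construction of base cases would ultimately be needed to make the argument effective rather than case-by-case. I expect the conjecture resolves along these lines only with either a more flexible combining operation that relaxes the length constraints, or a separate existence result guaranteeing the required base cases.
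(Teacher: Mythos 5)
You should be clear at the outset that the statement you were asked about is a \emph{conjecture} of Chung, Diaconis, and Graham: the paper does not prove it and does not claim to. Its actual contribution is a conditional result for a single case, $k=4$ and $n\equiv 2\pmod 8$, and even that rests on two unverified base cases ($n=18$ and $n=26$). Your submission is likewise not a proof but a research program, and you concede this yourself: the ``gluing lemma in full generality'' that your induction needs is exactly the open content of the conjecture, and asserting that it ``should'' follow from a double-counting step plus an Eulerian-type connectivity argument does not discharge it. The same goes for the base cases: you assume a finite list can be ``verified by hand or by computer'' in every progression for every $k$, but nothing in your argument (or in the paper) guarantees this, and for $k=4$ the paper explicitly cannot supply even its two base cases.

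Beyond the overall status, several specific steps in your plan conflict with what the machinery can actually do. First, you misread the operations: $\SUB^k$ in the paper is merely notation for the multiset of $k$-substrings; the ``sum'' is $\oplus^k$, which concatenates two cycles only when they share a common $(k-1)$-string, and the ``substitution'' lemma (Lemma 6.1) is just a relabeling of symbols by a bijection, not a device for splicing blocks of new symbols into an existing cycle. Second, your partition of $P_k([n])$ into classes ``$j$ new, $k-j$ old'' is not how the paper proceeds, and the naive version fails: the mixed classes require cycles on $(k-j)$-subsets of the old alphabet whose lengths are divisible by $k$ (the Product Theorem needs $k\mid |C|$ and $k\mid |D|$), which is why the paper introduces the two bridge symbols $\alpha,\beta$, the sets $M_0,\dots,M_4$, and the divisibility hypotheses ($a,b\equiv 0\pmod 8$, $3\nmid a+1$). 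Third, the Product Theorem alone produces $s=\gcd(|C|u,|D|t)/k$ weave cycles, far too many to sum; reducing this to a summable family requires the $(t,k)$-benign condition, and the paper itself points out (Section on future directions) that $(3,k)$-benign ucycles are not known to exist, which blocks precisely your ``$j$ new, $k-j$ old'' classes with $k-j\ge 3$ once $k\ge 6$. So the proposal, as written, could not be completed into a proof by routine work; the conjecture remains open after both your argument and the paper's.
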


It is easy to show that this conjecture holds when $k\in\{1,2\}$.

In 1993, Jackson \cite{J}  showed that for all $n\ge8$ not divisible by 3, there exist ucycles on $\left[\begin{smallmatrix} n \\ 3 \end{smallmatrix} \right]$, completing the $k=3$ case. The same paper also proved that for odd $n\ge9$, there exist ucycles on $\left[\begin{smallmatrix} n \\ 4 \end{smallmatrix} \right]$. Since  $\binom{n-1}{3}$ is divisible by 4 if and only if $n$ is odd or $n \equiv 2 \pmod 8$, this leaves only the $n \equiv 2 \pmod 8$ case unresolved for $k=4$. 

In 1994, Hurlbert \cite{H} unified Jackson's  results and gave a partial solution for $k=6$ with the following theorem:

\begin{theo}[Hurlbert]
For $k \in \{3,4,6\}$ and sufficiently large $n$ relatively prime to $k$, there exist ucycles on $\left[\begin{smallmatrix} n \\ k \end{smallmatrix} \right]$. 
\end{theo}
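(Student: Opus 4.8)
The plan is to convert the problem into one about Euler tours and then exploit the cyclic symmetry of $[n]$. First I would introduce the \emph{transition digraph} $G_{n,k}$: its vertices are the ordered $(k{-}1)$-tuples of distinct elements of $[n]$, and for every $k$-set $S$ together with a linear ordering $s_1 s_2 \cdots s_k$ of its elements there is an arc $(s_1,\ldots,s_{k-1}) \to (s_2,\ldots,s_k)$, which we label by $S$. A universal cycle on $\left[\begin{smallmatrix} n \\ k \end{smallmatrix} \right]$ is precisely a closed walk in $G_{n,k}$ that uses exactly one arc of each label. So one must choose, for each $k$-set, one of its $k!$ orderings (a \emph{transition}) so that the chosen arcs form a sub-digraph that is (i) \emph{balanced} (in-degree equals out-degree at every vertex) and (ii) \emph{connected} on its support; an Euler tour of that sub-digraph then spells out the ucycle. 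The Chung--Diaconis--Graham lemma above is just the global consequence of the balance condition.

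Second, I would restrict attention to ucycles invariant under the shift $x \mapsto x+1 \pmod n$ of all symbols. Such a ucycle is built from a single \emph{block} $B$ of length $m := \tfrac1k\binom{n-1}{k-1}$ as the concatenation $B,\, B{+}1,\, \ldots,\, B{+}(n{-}1)$, using $\binom{n}{k} = nm$. This is exactly where coprimality is used: a $k$-subset of $\mathbb{Z}_n$ can be fixed by a nontrivial rotation only if $\gcd(n,k) > 1$, so when $\gcd(n,k)=1$ every $\mathbb{Z}_n$-orbit of $k$-sets has full length $n$, the $k$-sets split into exactly $m$ orbits, and a $\mathbb{Z}_n$-equivariant transition is determined by a single choice per orbit. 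The task thus reduces to producing $m$ \emph{starter} orderings, one per orbit, whose induced arcs, viewed in the quotient digraph $G_{n,k}/\mathbb{Z}_n$, are balanced and lift to a single Euler component upstairs.

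Then I would attack this quotient problem for the three admissible values of $k$. Balance is a finite system of linear flow-conservation equations over $\mathbb{Z}$ in the starters, and I expect to solve it by a uniform rotational recipe for $k \in \{3,4,6\}$ — this is the computational core, and it is precisely here that the restriction to these $k$ enters, since their orbit structure under $\mathbb{Z}_n$ is simple enough to admit a clean greedy/algebraic construction (recovering Jackson's results on $\left[\begin{smallmatrix} n \\ 3 \end{smallmatrix} \right]$ and on $\left[\begin{smallmatrix} n \\ 4 \end{smallmatrix} \right]$ for odd $n$, and extending to $\left[\begin{smallmatrix} n \\ 6 \end{smallmatrix} \right]$). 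The genuine obstacle, I expect, is connectivity: balance is essentially linear algebra, but forcing the chosen arcs to weave into one Euler component rather than several — which is \emph{not} implied by connectivity of the quotient alone, since one must also make the $\mathbb{Z}_n$-holonomy around quotient cycles generate all of $\mathbb{Z}_n$ — calls for the starters to be chosen with enough spread, and there is only enough freedom to arrange this once $n$ is large; this is why the statement asks for sufficiently large $n$ rather than all admissible $n$. The finitely many small $n$ in each residue class I would settle by ad hoc constructions or machine search. Finally, as a fallback for the large-$n$ connectivity step — and in the spirit of the inductive method developed in the rest of this paper — one could instead fix a base ucycle in each class and build larger ones by a local surgery that splices the new $k$-sets into an existing cycle; I would keep this in reserve.
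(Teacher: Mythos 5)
The theorem you are asked about is not proved in this paper at all: it is quoted as background from Hurlbert's 1994 paper \cite{H}, so there is no internal argument to compare yours against. Judged on its own terms, your proposal correctly sets up the standard framework — encode a ucycle as an Eulerian circuit in the transition digraph on ordered $(k-1)$-tuples, with one arc chosen per $k$-set, and use $\gcd(n,k)=1$ to pass to $\mathbb{Z}_n$-equivariant solutions built from a block of length $\tfrac1k\binom{n-1}{k-1}$ — and this is indeed the skeleton of the arguments of Jackson \cite{J} and Hurlbert \cite{H}. Your observations that balance is a flow-conservation condition, that fixed-point-freeness of the $\mathbb{Z}_n$-action on $k$-sets follows from coprimality, and that connectivity of the quotient does not suffice (one must also control the holonomy of the lift) are all sound.

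The genuine gap is that the proposal stops exactly where the proof begins. The entire content of Hurlbert's theorem is the explicit transition rule: for each $\mathbb{Z}_n$-orbit of $k$-sets one must actually specify which ordering (equivalently, which ``starter'') to use, then verify in-degree/out-degree balance at every $(k-1)$-tuple and strong connectivity of the chosen sub-digraph for $k\in\{3,4,6\}$ and all large coprime $n$. You defer this with ``I expect to solve it by a uniform rotational recipe'' and describe connectivity as an obstacle to be overcome by choosing starters ``with enough spread,'' but no rule is given, no balance computation is carried out, and no connectivity argument is made; the appeal to ``ad hoc constructions or machine search'' for small $n$ and a splicing fallback ``kept in reserve'' cannot substitute for the construction, since the theorem is an infinite family of statements. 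In Hurlbert's actual proof this core is handled by a concrete ordering of each $k$-set (chosen via difference conditions on its elements) for which balance and connectivity can be checked uniformly in $n$; the restriction to $k\in\{3,4,6\}$ and to large $n$ emerges from that verification, not merely from the orbit count. As it stands, your text is a correct program for a proof, not a proof.
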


In addition to the published results above, Jackson claims to have an unpublished result completing the $k=4$ and proving the $k=5$ case. 

In this paper, we provide a new method of constructing universal cycles on $k$-subsets of $[n]$. Instead of finding a ucycle directly, we build the ucycle up from smaller cycles. In particular, we demonstrate a method for taking ``sums'' and ``products'' of cycles. Although these methods have significant limitations, they give us a powerful new tool for finding universal cycles on sets. In fact, an application of these new techniques allows us to prove the following results:

\begin{named}[Main Theorem]
If $a$ and $b$ are positive multiples of 8 such that  neither $a+1$ nor $b+1$ are divisible by 3, then if there exist  universal cycles on $\left[\begin{smallmatrix} a+2 \\ 4 \end{smallmatrix} \right]$ and $\left[\begin{smallmatrix} b+2 \\ 4 \end{smallmatrix} \right]$, there must exist  universal cycles on $\left[\begin{smallmatrix} a+b+2 \\ 4 \end{smallmatrix} \right]$.
\end{named}
\begin{named}[Corollary to Main Theorem]
As long as we can find universal cycles on $\left[\begin{smallmatrix} 18 \\ 4 \end{smallmatrix} \right]$ and $\left[\begin{smallmatrix} 26 \\ 4 \end{smallmatrix} \right]$, we can find universal cycles on $4$-subsets on $\left[\begin{smallmatrix} n \\ 4 \end{smallmatrix} \right]$ for any $n = 2 \pmod 8$ satisfying $n\ge18$.
\end{named}

\section{Definitions}\label{definitions}

\subsection{General}
\begin{note}
In this paper, $\cup$ and ``union" of two multisets $A$ and $B$ will be used to denote the multiset which consists of combining the elements without removing any duplicates. For example, we would say $$\{a, a, b\} \cup\{a, b, c\} = \{a, a, a, b, b, c\}.$$
We will not be using the standard set union in this paper. 
\end{note}
Let $[n] = \{0,1,\ldots, n-1\}$ and $\left[\begin{smallmatrix} n \\ k \end{smallmatrix} \right]$ denote the set of all $k$-element subsets of $[n]$. Note that this may differ from some conventional definitions of $[n] = \{1, 2, \cdots n\}$. 

Define a \textbf{k-string} to be a string of length $k$, and a \textbf{k-multiset} to be a multiset of cardinality $k$.

Denote the cardinality of a multiset $A$ as $|A|$ and the length of a string $S$ as $|S|$.

Define the powerset of a set $A$, denoted $P(A)$, to be the set of subsets of $A$. Furthermore, define $P_k(A) = \{M\in P(A) : |M| = k\}$ to be the set of all $k$-element subsets of $A$.

If $M$ and $N$ are both multisets of multisets, define their product,
$M \times N$, to be the multiset consisting of all the unions of elements of $M$ with elements of $N$. In other words, $M \times N= \{A \cup B: A\in M, B\in N\}$. For example, if $M = \{\{a,b\}, \{c\}  \}$ and $N = \{\{x\}, \{y,z\} \}$, then
$$N \times M = \{\{a,b,x\}, \{a,b,y,z  \},  \{c,x\}, \{c,y,z  \}  \}.$$

If $S$ and $T$ are both strings, let the concatenation of $S$ with $T$, written $S\cdot T$, be the string consisting of the characters of $S$ followed by the characters of $T$.

Denote the multiset of $k$-substrings of $S$ as $\SUB^k(S)$. For example, if $S = abcabcd$, and $k=3$, we would have
$\SUB^k(S) = \{abc, bca, cab, abc, bcd\}.$

If $S$ is a string, let $\Gamma (S)$ denote the multiset of characters in $S$. If $M$ is a multiset of strings, then let $\Gamma (M)$ denote $\{\Gamma (S) : S \in M\}$. For example,
$\Gamma(cycle) = \{c, c,e, l, y  \}$
and
$\Gamma(\{and, text\}) = \{a, d, e, n, t, t, x\}.$

\subsection{Cycles} 
Let a \textbf{length z} cycle be a string of length $z$. 
\\If $C$ is a cycle, let $C_x$ denote the $(x+1)$\textsuperscript{th} symbol in $C$, up to modulo $|C|$. Note that the first symbol of $C$ is $C_0$ and not $C_1$.
\\If $C$ is a cycle, let $C_x^k$ denote the $k$-string $C_xC_{x+1}\cdots C_{x+k-2}C_{x+k-1}$.
\\If $C$ is a cycle, let the \textbf{k-range} of $C$ be the multiset ${\{C_x^k : 0 \le x \le |C| - 1 \}}$. We will use $R^k(C)$ to denote the $k$-range of $C$. For example, $$R^2(inoh) =  \{in, no, oh, hi  \}$$
and $$R^3(abcdabc) =  \{abc, bcd, cda, dab, abc, bca, cab \}.$$
\begin{rem}
Equivalently, the $k$-range of $C$ is 
\begin{equation*}\label{R}R^k(C) = 
\SUB^k(C_0C_1\cdots C_{|C|+k-3}C_{|C|+k-2}) = \SUB^k(C \cdot C_0^{k-1}) .\end{equation*}
This can be thought of as the multiset of the $|C|$ different length-$k$ substrings of $C$ if we allow ``looping over'' from the end of $C$ to the beginning of $C$. 
\end{rem}

If $\mathcal{A}$ is a set of symbols, we say that $C$ is a \textbf{universal cycle} or a \textbf{ucycle} on $P_k(\mathcal{A})$ if $\Gamma\left(R^k(C)\right)  = P_k(\mathcal{A})$. In other words, if $C$ is a universal cycle on $P_k(\mathcal{A})$, then every string in the $k$-range of $C$ consists of $k$ different symbols in $\mathcal{A}$, the set of these $k$ symbols is different for each element of $C$'s $k$-range, and for any $k$ symbols in $\mathcal{A}$, there is some element of the $k$-range of $C$ which consists of these $k$ symbols.

Since  $P_k([n]) = \left[\begin{smallmatrix} n \\ k \end{smallmatrix} \right]$, this new definition agrees with the earlier definition of a universal cycle on $\left[\begin{smallmatrix} n \\ k \end{smallmatrix} \right]$.

\subsection{Rotations} 
We say that a \textbf{rotation} of a cycle $C$ is any cycle of the form $C_xC_{x+1}\cdots \allowbreak C_{x+|C|-1}$. In other words, a rotation of $C$ is any cycle which could be obtained from $C$ by repeatedly moving a symbol from the beginning of $C$ to the end of $C$. For example, the rotations of $``abcbc"$ are $``abcbc"$, $``bcbca"$, $``cbcab"$, $``bcabc"$, and $``cabcb"$.

Two simple but important facts follow from this definition. First, rotating a cycle does not change the $k$-range of the cycle for any $k$. Second, if $S$ is in the $k$-range of $C$, there exists some rotation $C'$ of $C$ such that $S = C'_0C'_1\cdots C'_{k-1}$; in other words, if $S$ is in $C$ $k$-range, we can always rotate $C$ so that it starts with $S$.

\section{Cycle Addition} \label{Cycle Addition}
In this section, we present a method for taking the $k$-sum ($\oplus^k$) of two cycles to get a new cycle. This operation requires the addends to have a common string of length at least $k-1$, and has several useful properties which we prove in Theorem 3.2 and its corollary.
\subsection{Construction}
Let $S$ be a $(k-1)$-string, and let $C$ and $D$ be cycles containing $S$. If $C'$ and $D'$ are rotations of $C$ and $D$ which both start with $S$, 
we say that $C' \cdot D'$ is a \textbf{k-sum} of $C$ and $D$. Note that $S$ here is arbitrary; all we require is that the first $k-1$ symbols of $C'$ and $D'$ match. 

If there is at least one cycle which is the $k$-sum of $C$ and $D$, we will use $C\oplus^k D$ to denote some (arbitrary) $k$-sum of $C$ and $D$. 

\subsection{Example}
For example, let $C = ``abc"$ and $D = ``bcdab"$. In this case the 3-sums of $C$ and $D$ are $abc \cdot abbcd = abcabbcd$ and $bca \cdot bcdab = bcabcdab$.

\subsection{Properties}
\begin{rem}
If $C$ and $D$ have intersecting $(k-1)$-ranges, then there is at least one $k$-sum of $C$ and $D$.
\end{rem}

\begin{lem}
If $E$ is a $k$-sum of $C$ and $D$, then $E$ is a $(k-1)$-sum of $C$ and $D$.
\end{lem}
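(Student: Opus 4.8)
The plan is to unwind the definitions of $k$-sum and $(k-1)$-sum and show that the \emph{same} concatenated string $E$ that witnesses the $k$-sum also witnesses the $(k-1)$-sum. Recall that $E$ being a $k$-sum of $C$ and $D$ means there is a $(k-1)$-string $S$ and rotations $C'$, $D'$ of $C$ and $D$, each starting with $S$, such that $E = C' \cdot D'$. I want to produce a $(k-2)$-string $T$ and rotations of $C$ and $D$ both starting with $T$ whose concatenation is literally $E$.

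The natural choice is $T := S_0 S_1 \cdots S_{k-3}$, the length-$(k-2)$ prefix of $S$ (this uses $k \ge 2$, which is implicit since a $(k-1)$-string must have nonnegative length; if $k=2$ then $T$ is the empty string and the claim is immediate). Since $C'$ starts with $S$, it also starts with $T$; likewise $D'$ starts with $T$. Moreover $C'$ is a rotation of $C$ and $D'$ is a rotation of $D$. Hence by the definition of $(k-1)$-sum applied with the string $T$ and the rotations $C'$ and $D'$, the string $C' \cdot D' = E$ is a $(k-1)$-sum of $C$ and $D$. That is the whole argument.

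The key steps, in order, are: (1) invoke the definition of $k$-sum to extract $S$, $C'$, $D'$ with $E = C' \cdot D'$ and both rotations starting with $S$; (2) let $T$ be the length-$(k-2)$ prefix of $S$ and observe both $C'$ and $D'$ start with $T$ because a string starting with $S$ starts with any prefix of $S$; (3) conclude via the definition of $(k-1)$-sum that $E = C' \cdot D'$ is a $(k-1)$-sum of $C$ and $D$. I anticipate no real obstacle here — the statement is essentially a monotonicity observation about the ``common prefix length'' requirement, and the main thing to be careful about is the bookkeeping of string lengths and the degenerate small-$k$ cases (the empty-string prefix when $k = 2$), plus noting that $C \oplus^k D$ being defined guarantees a $k$-sum exists so the hypothesis is not vacuous.
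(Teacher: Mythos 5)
Your proposal is correct and follows essentially the same argument as the paper: write $E = C' \cdot D'$ with both rotations starting with the common $(k-1)$-string, observe they then share the length-$(k-2)$ prefix, and conclude from the definition of $(k-1)$-sum. Your extra remarks about the $k=2$ degenerate case are harmless additional bookkeeping.
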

\begin{proof} Since $E$ is a $k$-sum of $C$ and $D$, we can write $E = C' \cdot D'$, where $C'$ and $D'$ are rotations of $C$ and $D$ starting with the same $k-1$ characters. Since $C'$ and $D'$ start with the same $k-1$ characters, they must also start with the same $k-2$ characters, so $\, C' \cdot D' = E$ is also a $(k-1)$-sum of $C$ and $D$.
\end{proof}

\begin{theo}\label{sumsWork}
If $E$ is a $k$-sum of $C$ and $D$, then the $k$-range of $E$ is the disjoint-union of the $k$-ranges of $C$ and $D$. Formally, 
$$R^k (C \oplus^k D) = R^k (C) \cup R^k (D).$$
\end{theo}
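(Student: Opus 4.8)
The plan is to write $E = C'\cdot D'$, where $C'$ and $D'$ are rotations of $C$ and $D$ both beginning with a common $(k-1)$-string $S$, and then to track exactly which length-$k$ substrings of $E\cdot E_0^{k-1}$ arise. Since rotation does not change the $k$-range (as noted in the Rotations subsection), it suffices to prove $R^k(E) = R^k(C') \cup R^k(D')$. The key combinatorial observation is that the length-$k$ windows into the cyclic string $E$ fall into three groups: those lying entirely within the $C'$ portion, those lying entirely within the $D'$ portion, and those straddling one of the two ``seams'' (the junction from $C'$ to $D'$, and the wrap-around junction from $D'$ back to the start of $C'$). The windows of the first type are exactly the non-straddling windows of $C'$, and similarly for the second type with $D'$.

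The heart of the argument is to show that the straddling windows at each seam, together with the non-straddling windows on each side, reassemble precisely into $R^k(C')$ and $R^k(D')$ with nothing lost or double-counted. Here I would use the fact that both $C'$ and $D'$ begin with $S = C'_0\cdots C'_{k-2} = D'_0\cdots D'_{k-2}$, which has length exactly $k-1$. Concretely, I would write $E\cdot E_0^{k-1} = C'\cdot D'\cdot C'_0^{k-1}$ and observe that the windows straddling the $C'$-to-$D'$ seam read off substrings of $C'_{|C'|-(k-1)}\cdots C'_{|C'|-1}D'_0\cdots D'_{k-2}$; since $D'_0\cdots D'_{k-2} = S$ and $S$ also equals $C'_0\cdots C'_{k-2}$, this string equals the ``wrap'' block $C'_{|C'|-(k-1)}\cdots C'_{|C'|-1}C'_0\cdots C'_{k-2}$ of $C'$, so these straddling windows are exactly the wrap-around windows of $C'$ — i.e. the elements of $R^k(C')$ not already counted among the windows interior to the $C'$ block. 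An identical computation at the wrap-around seam of $E$, using $C'_0\cdots C'_{k-2} = S = D'_0\cdots D'_{k-2}$, shows the remaining straddling windows are exactly the wrap-around windows of $D'$. Summing the three groups gives the interior-plus-wrap decomposition of $R^k(C')$ plus that of $R^k(D')$, which is $R^k(C)\cup R^k(D)$, and a count of indices confirms $|R^k(E)| = |E| = |C| + |D| = |R^k(C)| + |R^k(D)|$ so no multiplicity is lost.

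The main obstacle I anticipate is purely bookkeeping: getting the index ranges for the ``interior'' versus ``straddling'' windows exactly right, and handling the degenerate possibility that $|C| < k$ or $|D| < k$ (in which case a single window can straddle both seams at once, or wrap around a short cycle multiple times). I would dispatch the generic case cleanly with the substring identity above and then remark that the short-cycle cases either cannot occur under the hypotheses we care about or can be checked directly; alternatively, one can sidestep the casework entirely by phrasing everything through the Remark's identity $R^k(C) = \SUB^k(C\cdot C_0^{k-1})$ and the elementary fact that $\SUB^k(X\cdot Y) = \SUB^k(X\cdot Y_0^{k-1}) \cup \SUB^k(Y\cdot X_0^{k-1})$ whenever $X$ and $Y$ share a common length-$(k-1)$ prefix after suitable rotation — reducing the theorem to a statement about ordinary (non-cyclic) substrings, which is where I would try to locate the cleanest proof.
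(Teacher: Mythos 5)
Your proposal is correct and is essentially the paper's own argument: both hinge on writing $E = C'\cdot D'$ with the common $(k-1)$-prefix $S$ and observing that the cyclic $k$-windows of $E$ decompose, via $S$, into exactly the cyclic $k$-windows of $C'$ and of $D'$. The paper simply packages the seam bookkeeping you describe into the identities $R^k(C') = \SUB^k(C'\cdot S)$, $R^k(D') = \SUB^k(D'\cdot S)$ and the single concatenation step $\SUB^k(C'\cdot S)\cup\SUB^k(D'\cdot S)=\SUB^k(C'\cdot D'\cdot S)=R^k(E)$ --- precisely the alternative route you mention at the end, which also dispenses with the short-cycle casework you were worried about.
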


\begin{proof}  Since $E$ is a $k$-sum of $C$ and $D$, we can write $E = C' \cdot D'$, where $C'$ and $D'$ are rotations of $C$ and $D$ starting with the same $k-1$ characters. Let us call the $(k-1)$-string of those first characters $S$.
\begin{align*}
R^k(C) = R^k(C') &= \SUB^k(C' \cdot S), \textnormal{ and}
\\R^k(D) = R^k(D') &=  \SUB^k(D' \cdot S). \phantom{\textnormal{ and}}
\end{align*}
 Thus, $$R^k(C) \cup R^k(D) = \SUB^k(C' \cdot S) \cup \SUB^k(D' \cdot S).$$ 
But since the last $k-1$ characters of $\SUB^k(C' \cdot S)$ are the same as the first $k-1$ characters of $D' \cdot S$,  $$\SUB^k(C' \cdot S) \cup \SUB^k(D' \cdot S) = \SUB^k(C' \cdot D' \cdot S) = \SUB^k(E \cdot S) = R^k(E).$$
\end{proof}

A simple example of this theorem can be seen for $C = ``abc''$, $D = ``bcde''$, and $k = 3$. Here, a 3-sum of $C$ and $D$ is $bca \cdot bcde = bcabcde$, and
\begin{align*}R^3(bcabcde) &= \{bca, cab, abc, bcd, cde, deb, dec\} \\&= \{bca, cab, abc\} \cup \{bcd, cde, deb, dec\} \\&= \{abc, bca, cab\} \cup \{bcd, cde, deb, dec\} = R^3(C) \cup R^3(D).\end{align*}

\begin{cor}\label{sumsWork2}
If $E$ is a $k$-sum of $C$ and $D$, then the $(k-1)$-range of $E$ is the disjoint-union of the $(k-1)$-ranges of $C$ and $D$.
\end{cor}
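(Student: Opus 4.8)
The plan is to derive Corollary~\ref{sumsWork2} directly from Theorem~\ref{sumsWork} and Lemma~3.1 with essentially no new work. The key observation is that Lemma~3.1 tells us that if $E$ is a $k$-sum of $C$ and $D$, then $E$ is also a $(k-1)$-sum of $C$ and $D$. So the quantity we care about, $R^{k-1}(E)$, is the $(k-1)$-range of something that is itself a $(k-1)$-sum of $C$ and $D$.

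First I would invoke Lemma~3.1 to replace the phrase ``$E$ is a $k$-sum of $C$ and $D$'' with ``$E$ is a $(k-1)$-sum of $C$ and $D$.'' Then I would apply Theorem~\ref{sumsWork} with $k$ replaced by $k-1$: since $E$ is a $(k-1)$-sum of $C$ and $D$, that theorem (instantiated at level $k-1$) gives $R^{k-1}(E) = R^{k-1}(C) \cup R^{k-1}(D)$, which is exactly the disjoint-union statement we want. The only mild subtlety is that Theorem~\ref{sumsWork} is phrased using the notation $C \oplus^k D$ for ``some $k$-sum,'' but its proof clearly establishes the statement for an arbitrary fixed $k$-sum $E$, so it applies verbatim with $E$ in place of $C \oplus^{k-1} D$ and with $k-1$ in place of $k$.

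There is no real obstacle here; the one thing to be careful about is the edge case, namely that Theorem~\ref{sumsWork} at level $k-1$ implicitly requires $k-1 \ge 1$ so that $(k-2)$-strings and $(k-1)$-ranges make sense, but this is guaranteed since for a $k$-sum to exist we already need $k \ge 2$ (the shared string $S$ has length $k-1 \ge 1$). I would write the proof as a two-line chain: ``By Lemma~3.1, $E$ is a $(k-1)$-sum of $C$ and $D$. Applying Theorem~\ref{sumsWork} with $k-1$ in place of $k$ yields $R^{k-1}(E) = R^{k-1}(C) \cup R^{k-1}(D)$.'' If desired, one could instead give a self-contained proof mirroring the proof of Theorem~\ref{sumsWork} verbatim (writing $E = C' \cdot D'$ with $C', D'$ sharing their first $k-1$ — hence also their first $k-2$ — characters, then running the same $\SUB$ identity at level $k-1$), but the short route via the lemma is cleaner and is what I would present.
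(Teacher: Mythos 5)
Your proposal is correct and matches the paper's own proof essentially verbatim: both invoke Lemma~3.1 to conclude that $E$ is a $(k-1)$-sum of $C$ and $D$, and then apply Theorem~\ref{sumsWork} at level $k-1$ to obtain $R^{k-1}(E) = R^{k-1}(C) \cup R^{k-1}(D)$. Your extra remarks about the theorem applying to an arbitrary fixed $k$-sum and about $k \ge 2$ are fine but not needed.
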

\begin{proof}
 By Lemma 2, If $E$ is a $k$-sum of $C$ and $D$, it is also a $(k-1)$-sum of $C$ and $D$. Thus, a straightforward application of Theorem \ref{sumsWork} tells us that $R^{k-1}(E) = R^{k-1}(C) \cup R^{k-1}(D)$. 
\end{proof}

\subsection{Cycle Summation}
Sometimes, we will want to take $k$-sums of more than 2 elements. This leads us to define a generalization over cycle addition which we will call cycle summation. If $\mathscr{C}$ is a set of cycles, we will say that it is \textbf{k-summable} if there exists a valid order in which we can add up all the elements of $\mathscr{C}$. If $\mathscr{C}$ is $k$-summable, we will furthermore define a \textbf{k-summation} of $\mathscr{C}$, denoted $\bigoplus^k \mathscr{C}$, to any $k$-sum of the elements of $\mathscr{C}$ taken in some valid order. 

Since $R^{k-1}(C \oplus^k D) = R^{k-1}(C) \cup R^{k-1}(D)$, some $C\oplus^k (D\oplus^k E)$ exists if and only if  some $C\oplus^k D$ or some $C\oplus^k E$ exists. Thus, $\mathscr{C}$  is $k$-summable if and only if for any $C,D \in \mathscr{C}$ there exists a set of cycles $C_0,C_1,\cdots,C_{n}$ in $\mathscr{C}$ such that $C = C_0$, $D = C_{n}$, and for any $i \in [n-1]$, some $C_i \oplus^k C_{i+1}$ exists.

\begin{rem}
We can extend the results of Theorem \ref{sumsWork} and Corollary \ref{sumsWork2} to $k$-summations. In other words, for any set of cycles $\mathscr{C}$, 
$$R^k\Big(\osum{k} \mathscr{C}\Big) = \bigcup_{C \in \mathscr{C}}R^k(C) \textnormal{ and}$$
$$R^{k-1}\Big(\osum{k} \mathscr{C}\Big) = \bigcup_{C \in \mathscr{C}}R^{k-1}(C).$$
\end{rem}

\section{Cycle Multiplication} \label{Cycle Multiplication}
In the previous section, we saw that if $C$ and $D$ were cycles satisfying certain simple conditions, we could find a cycle $C \oplus ^k D$ such that $R^k(C) \cup R^k (D) = R^k(C \oplus ^k D) $. It would be desirable to have an analogous result where we could find a cycle  $E$ such that $\Gamma \left(R^t(C)\right) \times \Gamma \left(R^u (D)\right) = \Gamma\left( R^{t+u}(E) \right)$. Unfortunately, such a cycle is sometimes impossible to find.\footnote{A simple example of this occurs when $t = u = 1$, $C=aaa$, and $D = b$. Then, $\Gamma \left(R^t(C)\right) \times \Gamma \left(R^u (D)\right)$ will be $\{\{a,b\}, \{a,b\}, \{a,b\}\}$, which cannot be the 2-range of any cycle.} Instead, we will show a slightly weaker result: as long as $|C|$ and $|D|$ are both multiples of $t+u$, there is a set of cycles $ \mathscr{C}$ such that $$\Gamma \left(R^t(C)\right) \times \Gamma \left(R^u (D)\right) = \bigcup_{E \in  \mathscr{C}} \Gamma\left( R^{t+u}(E) \right).$$
The elements of $ \mathscr{C}$ in this result are exactly the WEAVEs that we examine throughout this section.

\subsection {Construction}
Fix two positive integers $t$ and $u$, and let $k = t+ u$. Furthermore, let $C$ and $D$ be cycles such that both $|C|$ and $|D|$ are multiples of $k$. Then, for any integers $c$ and $d$, we will define $\WEAVE_{c,d}(C^t,D^u)$ to be the cycle $$C_c^t \cdot D_d^u \cdot C_{c+t}^t \cdot D_{d+u}^u \cdots C_{c+(r-1)t}^t \cdot D_{d+(r-1)u}^u$$ 
where $r = \frac{\lcm(|C|u, |D|t)}{t u}$. 

\begin{rem}
Since $k$ is a factor of both $|C|$ and $|D|$, $ k\cdot \lcm(t,u)=\lcm(ku, kt)$ is a factor of $\lcm(|C|u, |D|t)$. But $k = t+u$ is a multiple of $\gcd(t,u)$, so $ tu = \gcd(t,u) \cdot \lcm (t,u)$ divides $\lcm(|C|u, |D|t)$. Thus, $r$ is in fact an integer.
\end{rem}

Notice that we obtain $\WEAVE_{c,d}(C^t,D^u)$ by ``interweaving'' $C$ and $D$: we take $t$ characters from $C$, then $u$ characters from $D$, then $t$ characters from $C$, then $u$ characters from $D$, and so on. We continue this process, possibly looping over the cycles multiple times, until we simultaneously return to the place we started in both $C$ and $D$. Since $r = \frac{\lcm(|C|u, |D|t)}{tu}$ is the first value for which both $\frac {rt}{|C|}$ and $\frac {ru}{|D|}$ are integers, this happens after we have used $rt$ characters from $C$ and $ru$ characters from $D$.

\subsection {Example}
For example, let $t = 3, u=2, C = 12345,$ and $D = abcde.$ Then,  $$\WEAVE_{0,0}(C^3,D^2) = 123 \cdot ab \cdot 451 \cdot cd \cdot 234 \cdot ea \cdot 512 \cdot bc \cdot 345 \cdot de.$$
\subsection {Properties}
\begin{note}
Throughout this section, we will let $n$ be any integer, $m$ be any integer satisfying $0 \le m < k$, and $W$ be $\WEAVE_{c,d}(C^t,D^u)$. 
\end{note}

\begin{rem}
$|W| =rt + ru = rk$.
\end{rem}

\begin{rem}
$W_{nk}^k = C_{c+nt}^t \cdot D_{d+nu}^u$; that is, the length-$k$ substring of $W$ starting at index $nk$ is exactly the concatenation of the length-$t$ substring of $C$ starting at index $c+nt$ with the length-$u$ substring of $D$ starting at index $d+nu$. Note that this holds for all integers $n$, including those greater than $r$. 
\end{rem}

\begin{rem}
We can derive an explicit form for the symbol found at a given index of $W$:
$$W_{nk + m} = \begin{cases}C_{c+nt+m} & 0 \le m < t \\ 
D_{d+nu+(m-t)} & t \le m < k.   \end{cases}$$
This allows us to also find an explicit form for the $k$-substring of $W$ starting from a certain index:
$$W_{nk + m}^k = \begin{cases} 
C_{c+nt+m}^{t-m} \;\;\;\;\;\; \cdot  D_{d+nu}^u \;\;\;\;\;\; \cdot C_{c+(n+1)t}^{m}& 0 \le m <  t \vspace{13pt} \\ 
D_{d+nu+(m-t)}^{u-(m-t)} \cdot C_{c+(n+1)t}^t \cdot D_{d+(n+1)u}^{(m-t)} & t\, \le m < k.  
\end{cases}$$
Although this form is not particularly elegant, this result allows us to derive a much more manageable formulation for $\Gamma \big( W_{nk + m}^k\big)$ which will be fundamental to our proof of the Product Theorem. 
\end{rem}

\begin{lem} \label{gammaIs}
$$\Gamma \big( \big( \WEAVE_{c,d}(C^t,D^u) \big)_{nk + m}^k\big) = \begin{cases} 
\Gamma \big( C_{c+nt+m}^t \big) \cup \Gamma \big( D_{d+nu}^u \big)& 0 \le m <  t  \vspace{13pt}\\ 
\Gamma \big( C_{c+(n+1)t}^t \big) \cup \Gamma \big( D_{d+nu+(m-t)}^u \big) & t\, \le m < k. 
\end{cases}$$

\end{lem}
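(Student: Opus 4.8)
The plan is to apply the character-multiset operator $\Gamma$ directly to the explicit formula for $W_{nk+m}^k$ recorded in the preceding Remark, and then observe that two of the three resulting pieces always glue back together into a single substring. The only ingredient beyond that Remark is the elementary fact that $\Gamma$ distributes over concatenation: for any strings $S_1, S_2, \ldots, S_j$ we have $\Gamma(S_1 \cdot S_2 \cdots S_j) = \Gamma(S_1) \cup \Gamma(S_2) \cup \cdots \cup \Gamma(S_j)$, which is immediate from the definition of $\Gamma$ as the multiset of characters of a string together with the definition of $\cup$ as multiset union. Splitting into the two cases $0 \le m < t$ and $t \le m < k$ exactly as in the Remark, I would write $\Gamma(W_{nk+m}^k)$ as a union of three character-multisets and then recombine.

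In the case $0 \le m < t$ this gives
$$\Gamma\big(W_{nk+m}^k\big) = \Gamma\big(C_{c+nt+m}^{t-m}\big) \cup \Gamma\big(D_{d+nu}^u\big) \cup \Gamma\big(C_{c+(n+1)t}^{m}\big).$$
The key step is to note that the two $C$-pieces are consecutive: the string $C_{c+nt+m}^{t-m}$ runs over the indices $c+nt+m, \ldots, c+nt+t-1$, and since $c+(n+1)t = (c+nt+m)+(t-m)$, the string $C_{c+(n+1)t}^{m}$ picks up at exactly the next index and runs over $c+(n+1)t, \ldots, c+(n+1)t+m-1$. Hence $C_{c+nt+m}^{t-m} \cdot C_{c+(n+1)t}^{m} = C_{c+nt+m}^{t}$, and re-applying the concatenation identity collapses those two terms into $\Gamma(C_{c+nt+m}^t)$, yielding the first line of the claim. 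The degenerate subcase $m = 0$ is harmless, since then $C_{c+(n+1)t}^0$ is the empty string and contributes the empty multiset.

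The case $t \le m < k$ is entirely symmetric, with the roles of $C$ and $D$ interchanged: the explicit formula gives $\Gamma(W_{nk+m}^k)$ as $\Gamma(D_{d+nu+(m-t)}^{u-(m-t)}) \cup \Gamma(C_{c+(n+1)t}^t) \cup \Gamma(D_{d+(n+1)u}^{m-t})$, and since $d+(n+1)u = (d+nu+(m-t)) + (u-(m-t))$ the two $D$-pieces concatenate to $D_{d+nu+(m-t)}^u$, giving the second line. I do not expect any real obstacle here; the whole content is index bookkeeping, and the only thing to be careful about is confirming that the offset of the second $C$-piece (resp.\ second $D$-piece) lands flush against the end of the first, so that the concatenation is genuinely a contiguous $t$-substring (resp.\ $u$-substring) rather than overlapping or skipping an index. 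Because every index is read modulo $|C|$ or $|D|$, there are no boundary or wraparound subtleties to handle.
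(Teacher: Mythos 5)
Your proposal is correct and follows essentially the same route as the paper: apply $\Gamma$ to the explicit formula for $W_{nk+m}^k$ and observe that the two $C$-pieces (resp.\ $D$-pieces) are contiguous, so their characters recombine into $\Gamma\big(C_{c+nt+m}^t\big)$ (resp.\ $\Gamma\big(D_{d+nu+(m-t)}^u\big)$). Your explicit index check that $c+(n+1)t = (c+nt+m)+(t-m)$ is exactly the step the paper performs implicitly when it collapses $C_{c+nt+m}^{t-m}\cdot D_{d+nu}^u\cdot C_{c+(n+1)t}^{m}$ to $C_{c+nt+m}^{t}\cdot D_{d+nu}^u$ inside $\Gamma$.
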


\begin{proof}
In the notation of this section, $\big( \WEAVE_{c,d}(C^t,D^u) \big)_{nk + m}^k = W_{nk + m}^k$, and we can use the result above to compute  

\begin{align*}
\Gamma \big( W_{nk + m}^k\big) 
& = \begin{cases} 
\Gamma \Big( C_{c+nt+m}^{t-m} \;\;\;\;\;\; \cdot  D_{d+nu}^u \;\;\;\;\;\; \cdot C_{c+(n+1)t}^{m} \Big)& 0 \le m < \, t \vspace{11pt} \\ 
\Gamma \Big( D_{d+nu+(m-t)}^{u-(m-t)} \cdot C_{c+(n+1)t}^t \cdot D_{d+(n+1)u}^{(m-t)} \Big) & t\, \le m < k 
\end{cases}
\\&= \begin{cases} 
\Gamma \Big( C_{c+nt+m}^t \cdot  D_{d+nu}^u \Big)& 0 \le m < \, t \vspace{11pt} \\
\Gamma \Big( D_{d+nu+(m-t)}^u \cdot C_{c+(n+1)t}^t  \Big) & t\, \le m < k  
\end{cases}
\\&= \begin{cases} 
\Gamma \big( C_{c+nt+m}^t \big) \cup \Gamma \big( D_{d+nu}^u \big)& 0 \le m < \, t \vspace{11pt} \\ 
\Gamma \big( C_{c+(n+1)t}^t \big) \cup \Gamma \big( D_{d+nu+(m-t)}^u \big) & t\, \le m < k. 
\end{cases}
\end{align*}
\end{proof}

\subsection{The Product Theorem}
We would like to prove
\begin{named}[Product Theorem]
Let $C$ and $D$ be any cycles for which $|C|$ and $|D|$ are both multiples of $t+u$. Then, there exists a value $s$ such that 
$$\Gamma \left(R^t(C)\right) \times \Gamma \left(R^u (D)\right) = \bigcup_{a=0}^{s-1} \Gamma\left(R^k\big( \WEAVE_{a,-a}(C^t,D^u) \big)\right).$$
\end{named}

We will start by defining two integer functions:
$$ F(nk+m)= \begin{cases} 
nt+m & 0 \le m < t \\ 
(n+1)t& t\, \le m < k 
\end{cases}$$
and
$$
G(nk+m) = \begin{cases}
nu & 0 \le m <  t \\ 
nu + (m-t)& t\, \le m < k. 
\end{cases}
$$
This allows us to write the result from Lemma \ref{gammaIs} in a simpler form:
$$\Gamma \Big( \big( \WEAVE_{c,d}(C^t,D^u) \big)_{nk + m}^k\Big) = \Gamma\big(C_{c+F(nk+m)}^t\big) \cup \Gamma\big(D_{d+G(nk+m)}^u\big), $$
Equivalently, if we substitute $i$ for $nk+m$, 
$$\Gamma \Big( \big( \WEAVE_{c,d}(C^t,D^u) \big)_{i}^k\Big) = \Gamma\big(C_{c+F(i)}^t\big) \cup \Gamma\big(D_{d+G(i)}^u\big), $$

If we let $H$ be the set $\{(F(i),G(i))  : 0 \le i < rk \}$, it follows that 
\begin{align*}
\Gamma \Big(R^k\big( \WEAVE_{c,d}(C^t,D^u) \big)\Big) &= \Big\{\Gamma\big(C_{c+F(i)}^t\big) \cup \Gamma\big(D_{d+G(i)}^u\big) : 0 \le i < rk\Big\}
\\& =  \Big\{\Gamma\big(C^t_{f+c}\big) \cup \Gamma\big(D^u_{g+d}\big) : (f,g) \in H\Big\}.
\end{align*}

To proceed beyond this point we will first need to prove some properties of $H$.

\begin{rem}
$F(i+k) = F(i) + t$ \ and \  $G(i+k) = G(i) + u$.
\end{rem}

\begin{lem}\label{canAdd}
$F(i) + G(i) = i$.
\end{lem}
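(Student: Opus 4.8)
\textbf{Proof proposal for Lemma \ref{canAdd}.}

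The plan is to verify the identity $F(i) + G(i) = i$ directly from the piecewise definitions of $F$ and $G$, treating the two cases $0 \le m < t$ and $t \le m < k$ separately. Writing $i = nk + m$ with $0 \le m < k$ (which is possible uniquely for any integer $i$, allowing $n$ to be negative), the claim reduces to checking that in each case the sum of the two pieces telescopes back to $nk + m$. In the first case, $F(i) = nt + m$ and $G(i) = nu$, so $F(i) + G(i) = nt + nu + m = n(t+u) + m = nk + m = i$. In the second case, $F(i) = (n+1)t$ and $G(i) = nu + (m - t)$, so $F(i) + G(i) = (n+1)t + nu + m - t = nt + nu + m = n(t+u) + m = nk + m = i$. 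Both cases use only $k = t + u$.

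The only subtlety worth flagging is that $F$ and $G$ as written are defined on expressions of the form $nk + m$, so one should first note that this representation is unique once we insist $0 \le m < k$, and that $n$ ranges over all integers (not just nonnegative ones), since later applications evaluate $F$ and $G$ at arguments shifted by multiples of $k$ in either direction. Given that, there is no real obstacle here: this is a two-line case check, and I would present it exactly as the computation above, perhaps consolidating the two cases into a single aligned display. The lemma is essentially bookkeeping that will be invoked in the Product Theorem to match up the indices of $C$ and $D$ inside a WEAVE with the pairs $(f, g)$ satisfying $f + g = i$, which is presumably why it is isolated as its own statement.
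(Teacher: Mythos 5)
Your proposal is correct and follows the same route as the paper: write $i = nk+m$ and check both cases of the piecewise definitions directly, using only $k = t+u$. The remark about allowing negative $n$ and uniqueness of the representation is fine but not needed beyond what the paper already implicitly assumes.
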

\begin{proof}
 If we write $i = nk + m$, 
\begin{align*}
F(i) + G(i) 
&=F(nk+m) + G(nk+m)
\\&= \left\{\begin{array}{lr} 
\big(nt+m\big) + \big(nu\big) & 0 \le m < \, t \\ 
\big((n+1)t\big) + \big(nu+ (m-t)\big)& t \le m < k 
\end{array}\right.
\\&= \left\{\begin{array}{lr} 
nt + m + nu & 0 \le m < \, t \\ 
nt + t+nu +m - t& t \le m < k 
\end{array}\right.
\\&= n(t+u) + m
\\& = i.
\end{align*}
\end{proof}

Throughout this subsection, we will say that two ordered pairs of integers are \textbf{similar} ($\sim$) if their first coordinates are equivalent modulo $|C|$ and their second coordinates are equivalent modulo $|D|$. In other words, $(x_1,y_1) \sim (x_2,y_2)$ if and only if $x_1  \equiv x_2 \pmod {|C|}$ and $y_1  \equiv y_2 \pmod {|D|}$.

\begin{rem}
If $(x_1,y_1) \sim (x_2,y_2)$, then $C^t_{x_1} = C^t_{x_2}$, $D^u_{y_1} = D^u_{y_2}$, and consequently, 
$$\Gamma\big(C^t_{x_1}\big) \cup \Gamma\big(D^u_{y_1}) = \Gamma\big(C^t_{x_2}\big) \cup \Gamma\big(D^u_{y_2}).$$
 \end{rem}

\begin{lem}  \label{noRepeats} 
If $i$ and $j$ are integers, we will have $(F(i),G(i))\sim (F(j),G(j))$ if and only if $j-i$ is a multiple of $rk$.
\end{lem}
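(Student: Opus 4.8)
The plan is to pin down exactly when $(F(i),G(i)) \sim (F(j),G(j))$ by reducing to a statement about a single integer. Recall from Lemma~\ref{canAdd} that $F(i) + G(i) = i$ and $F(j) + G(j) = j$. So if $(F(i),G(i)) \sim (F(j),G(j))$, writing $F(j) - F(i) = \alpha |C|$ and $G(j) - G(i) = \beta |D|$ for integers $\alpha, \beta$, we get
$$j - i = (F(j) + G(j)) - (F(i) + G(i)) = \alpha|C| + \beta|D|.$$
This shows $j - i$ is a combination of $|C|$ and $|D|$, which is promising but not yet the claim, so the real content is to upgrade ``$j-i \in |C|\mathbb{Z} + |D|\mathbb{Z}$'' to ``$j - i \in rk\,\mathbb{Z}$'', and to prove the converse.

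First I would handle the converse direction, which should be the easy half. Using the periodicity remark $F(i+k) = F(i) + t$ and $G(i+k) = G(i) + u$, iterate to get $F(i + rk) = F(i) + rt$ and $G(i+rk) = G(i) + ru$. Now $rt = \tfrac{\lcm(|C|u,|D|t)}{tu}\cdot t = \tfrac{\lcm(|C|u,|D|t)}{u}$, which is a multiple of $|C|$ (since $|C|u \mid \lcm(|C|u,|D|t)$ gives $|C| \mid \lcm(|C|u,|D|t)/u$); similarly $ru$ is a multiple of $|D|$. Hence $F(i+rk) \equiv F(i) \pmod{|C|}$ and $G(i+rk) \equiv G(i)\pmod{|D|}$, and by induction the same holds whenever we shift $i$ by any multiple of $rk$. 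So $rk \mid j - i$ implies similarity.

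For the forward direction I would return to $j - i = \alpha|C| + \beta|D|$ from above and show every such value is divisible by $rk$ — equivalently that $rk$ is a multiple of $\gcd(|C|,|D|)\cdot(\text{something})$; more precisely I want to show $\gcd(|C|,|D|) \cdot \text{(appropriate factor)}$... actually the clean route is: the set of achievable $j - i$ is a subgroup of $\mathbb{Z}$, hence of the form $e\mathbb{Z}$ for some $e \ge 1$; the converse direction already shows $rk\mathbb{Z}$ is contained in it, and I must show it is no larger, i.e.\ $e = rk$. To get $rk \mid e$ I would argue from minimality: take $i = 0$ so $(F(0),G(0)) = (0,0)$, and suppose $(F(j),G(j)) \sim (0,0)$ with $0 < j < rk$; then $F(j) \equiv 0 \pmod{|C|}$ and $G(j)\equiv 0\pmod{|D|}$. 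Writing $j = nk + m$ with $0 \le m < k$, the explicit formulas for $F$ and $G$ force $m = 0$ (since for $0 \le m < t$ we'd need $|C| \mid nt + m$ and $|D| \mid nu$, and the pair of congruences together with $0 \le m < k \le$ the relevant bounds rules out $m \neq 0$ — this is the one spot needing a short case check), reducing to $nt \equiv 0 \pmod{|C|}$ and $nu \equiv 0\pmod{|D|}$, i.e.\ $\tfrac{nt}{|C|}, \tfrac{nu}{|D|} \in \mathbb{Z}$; but $r$ was defined as the least positive such $n$, forcing $r \mid n$ and hence $rk \mid j$, a contradiction with $0 < j < rk$.

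The main obstacle I anticipate is the case analysis in the forward direction: showing that a similarity $(F(j),G(j)) \sim (F(i),G(i))$ cannot occur with the ``fractional part'' $m$ of $j-i$ nonzero. One has to be careful that $F$ and $G$ are defined piecewise and that shifting $i$ by a non-multiple of $k$ mixes the pieces; the cleanest fix is probably to first reduce to $i = 0$ by translation (legitimate since $F,G$ behave additively under shifts by multiples of $k$, and a general $i$ can be written as $i_0 + (\text{mult of }k)$ only if $k \mid$ stuff — so actually one may need to treat $i \bmod k$ and $j \bmod k$ on equal footing), then exploit $F(i)+G(i)=i$ to convert the two modular conditions into the single condition that $i$ itself is constrained, after which the definition of $r$ closes the argument. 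I expect roughly a page once the bookkeeping is done.
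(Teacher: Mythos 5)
Your backward direction is complete and is the same argument as the paper's: $F(i+rk)=F(i)+rt$, $G(i+rk)=G(i)+ru$, and $rt$, $ru$ are multiples of $|C|$, $|D|$ respectively. The problem is the forward direction, where the decisive step --- that similarity forces $j\equiv i\pmod k$ --- is never actually carried out. You route it through a reduction to $i=0$, but that reduction is not legitimate: $F$ and $G$ are only translation-compatible with shifts by multiples of $k$ (i.e.\ $F(i+nk)=F(i)+nt$, $G(i+nk)=G(i)+nu$), so you may normalize $i$ into $[k]$, but normalizing it all the way to $0$ presupposes exactly the kind of mod-$k$ alignment between $i$ and $j$ that you are trying to prove; you flag this yourself but do not resolve it. Moreover, the ``short case check'' you defer cannot be closed by the bound $0\le m<k$ alone: with $i=0$, $j=nk+m$, $0\le m<t$, the conditions are $|C|\mid nt+m$ and $|D|\mid nu$, and nothing about the sizes of $m$, $|C|$, $|D|$ rules out $m\neq 0$ in general. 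What makes it work is the standing hypothesis that $k$ divides both $|C|$ and $|D|$: reduce $F(i)\equiv F(j)\pmod{|C|}$ and $G(i)\equiv G(j)\pmod{|D|}$ to congruences mod $k$ and add them using Lemma \ref{canAdd}, giving $i=F(i)+G(i)\equiv F(j)+G(j)=j\pmod k$. That single observation is the missing idea.

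Once you have $i\equiv j\pmod k$, the rest of your scaffolding (the subgroup of achievable differences, the minimality argument at $i=0$) is unnecessary: write $j=i+nk$, use $F(i+nk)=F(i)+nt$ and $G(i+nk)=G(i)+nu$ to get $nt\equiv 0\pmod{|C|}$ and $nu\equiv 0\pmod{|D|}$, and conclude $r\mid n$, hence $rk\mid j-i$. (This is precisely the paper's proof.) One further small point: $r$ is defined by the formula $r=\lcm(|C|u,|D|t)/(tu)$, not as ``the least positive $n$'' with those divisibilities, so if you invoke minimality you should also verify that the set of such $n$ is exactly the set of multiples of $r$ --- e.g.\ $|C|\mid nt$ and $|D|\mid nu$ hold iff $\lcm(|C|u,|D|t)\mid ntu$, iff $r\mid n$.
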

\begin{proof}

$\boldsymbol{\big(}\!\!  \boldsymbol\Leftarrow \hspace{-3pt}  \boldsymbol{\big)}\!\!:$  Let $j-i = ark$ for some integer $a$. 
Since $rt$ is a multiple of $|C|$ and $ru$ is a multiple of $|D|$,
\begin{gather*}
F(j) = F(i+ark) = F(i) + art \equiv F(i) \pmod{|C|}
\\G(j) = G(i+ark) = G(i) + aru \equiv G(i) \pmod{|D|}.
\end{gather*} 

$\boldsymbol{\big(}\!\!  \boldsymbol\Rightarrow \!\!  \boldsymbol{\big)}\!\!:$ Let us assume $(F(i),G(i))\sim (F(j),G(j))$. Since $|C|$ and $|D|$ are both multiples of $k$, $F(i) \equiv F(j) \pmod k$ and $G(i) \equiv G(j) \pmod k$, so Lemma \ref{canAdd} tells us that
$$
i = F(i) + G(i) \equiv F(j) +G(j)= j \pmod {k}.
$$
Thus, we can write $j = i + nk$ for some integer $n$. But $F(i+nk) = F(i) + nt$ and $G(i+nk) = G(i) + nu$,
so $n$ must satisfy both $nt \equiv 0 \pmod{|C|}$ and $nu \equiv 0 \pmod{|D|}.$ The only such values of $n$ are multiples of  $\frac{\lcm(|C|u, |D|t)}{t u} = r$, so $j-i = nk$ is divisible by $rk$.
\end{proof}

\begin{lem} \label{H loops}
For any $i$, exactly one $(f,g) \in H$ satisfies $(f,g)\sim (F(i),G(i))$.
\end{lem}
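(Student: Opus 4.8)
The plan is to derive this directly from Lemma \ref{noRepeats}, which already pins down exactly when two indices produce similar pairs; the statement here is essentially a repackaging of that lemma once one notes that $\{0,1,\dots,rk-1\}$ is a complete set of residues modulo $rk$. First I would record that $\sim$ is an equivalence relation on ordered pairs of integers — it is componentwise congruence modulo $|C|$ and modulo $|D|$ — so in particular it is symmetric and transitive, which lets me pass freely between ``$(f,g)\sim(F(i),G(i))$'' and ``$(F(i),G(i))\sim(f,g)$'' and to chain such relations.

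For existence, given an arbitrary integer $i$, let $j$ be the unique integer with $0\le j<rk$ and $j\equiv i\pmod{rk}$. Then $j-i$ is a multiple of $rk$, so Lemma \ref{noRepeats} gives $(F(j),G(j))\sim(F(i),G(i))$, and $(F(j),G(j))\in H$ by the definition of $H$. Hence at least one element of $H$ is similar to $(F(i),G(i))$.

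For uniqueness, suppose $(f_1,g_1)$ and $(f_2,g_2)$ are both elements of $H$ similar to $(F(i),G(i))$, and write $(f_\ell,g_\ell)=(F(j_\ell),G(j_\ell))$ with $0\le j_\ell<rk$ for $\ell\in\{1,2\}$. By symmetry and transitivity of $\sim$ we get $(F(j_1),G(j_1))\sim(F(j_2),G(j_2))$, so Lemma \ref{noRepeats} forces $j_1-j_2$ to be a multiple of $rk$; since $0\le j_1,j_2<rk$, this gives $j_1=j_2$ and therefore $(f_1,g_1)=(f_2,g_2)$. Combining the two parts yields the claim.

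I do not expect a genuine obstacle: the only point needing a moment of care is that $H$ is a set, so ``exactly one'' must be read as ``there is such an element, and no two distinct such elements,'' which is precisely what the uniqueness step establishes. A harmless byproduct of the same argument is that the pairs $(F(0),G(0)),\dots,(F(rk-1),G(rk-1))$ are pairwise non-similar, hence distinct, so in fact $|H|=rk$.
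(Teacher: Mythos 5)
Your proof is correct and takes essentially the same approach as the paper: both arguments reduce the claim to Lemma~\ref{noRepeats} together with the observation that $[rk]$ is a complete set of residues modulo $rk$. You simply make explicit the existence and uniqueness halves (and the fact that $\sim$ is an equivalence relation) that the paper's shorter proof leaves implicit.
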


\begin{proof}
For any $i$, there is exactly one value $j \in [rk]$ satisfying $j \equiv i \pmod {rk}$. By Lemma \ref{noRepeats}, $j$ must be the only value in $[rk]$ satisfying $$(F(j),G(j)) \sim (F(i),G(i)),$$ so  $(f,g) = (F(j),G(j))$ is the only element of $H$ satisfying  $(f,g) \sim (F(i),G(i))$. 
\end{proof}

Let us define $s$ to be the smallest positive integer for which there exist $(f_1,g_1)$ and $(f_2,g_2)$ in $H$ satisfying $(f_2,g_2) \sim (f_1+s,g_1-s)$.\footnote{We know such an $s$ must exist because we can let $(f_1, g_1) = (f_2, g_2) =(F(0), G(0))$ and pick $s$ to be a multiple of both $|C|$ and $|D|$.}

\begin{lem}\label{noOverlaps}
If $(f_1,g_1)$ and $(f_2,g_2)$ are different elements of $H$ and $a$ and $b$ are integers such that $(f_1+a,g_1-a) \sim(f_2+b,g_2-b)$, then we must have $|a-b| \ge s$.
\end{lem}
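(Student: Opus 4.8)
The plan is to reduce everything to the single quantity $c = a - b$ and then play the minimality of $s$ against it. First I would observe that the hypothesis $(f_1+a,g_1-a)\sim(f_2+b,g_2-b)$ says $f_1 + a \equiv f_2 + b \pmod{|C|}$ and $g_1 - a \equiv g_2 - b \pmod{|D|}$; cancelling $b$ from the first congruence and $-b$ from the second, this is equivalent to $(f_2,g_2)\sim(f_1+c,g_1-c)$ with $c = a - b$. Since $|a-b| = |c|$, it suffices to prove $|c| \ge s$.

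Next I would split into three cases on the sign of $c$. If $c > 0$, then $(f_1,g_1),(f_2,g_2)\in H$ together with $(f_2,g_2)\sim(f_1+c,g_1-c)$ are exactly the configuration appearing in the definition of $s$, so by minimality $c \ge s$, i.e.\ $|c| \ge s$. If $c < 0$, I would transfer the shift to the other pair: $(f_2,g_2)\sim(f_1+c,g_1-c)$ rearranges (again just reading the two congruences) to $(f_1,g_1)\sim(f_2+(-c),g_2-(-c))$, and now applying the minimality of $s$ with the roles of $(f_1,g_1)$ and $(f_2,g_2)$ interchanged gives $-c \ge s$, hence $|c| \ge s$.

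The remaining case $c = 0$ is where the distinctness hypothesis is used. Here the similarity relation collapses to $(f_1,g_1)\sim(f_2,g_2)$. Writing $(f_1,g_1) = (F(i_1),G(i_1))$ and $(f_2,g_2) = (F(i_2),G(i_2))$ for the indices $i_1, i_2 \in [rk]$ coming from the definition of $H$, Lemma~\ref{noRepeats} forces $i_2 - i_1$ to be a multiple of $rk$, so $i_1 = i_2$ and $(f_1,g_1) = (f_2,g_2)$, contradicting that they are different elements of $H$ (equivalently, one invokes Lemma~\ref{H loops}: at most one element of $H$ lies in a given $\sim$-class). Hence $c \ne 0$, and combining the three cases yields $|a-b| = |c| \ge s$.

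I do not expect a genuine obstacle here; the argument is essentially bookkeeping. The two points that require a little care are keeping the signs consistent when moving the shift from one pair to the other in the $c < 0$ case, and remembering to invoke distinctness (via Lemma~\ref{noRepeats} or Lemma~\ref{H loops}) precisely in the $c = 0$ case — without it the statement is false, since taking $a = b$ and $(f_1,g_1) = (f_2,g_2)$ would give $|a-b| = 0 < s$.
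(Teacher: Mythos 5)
Your argument is correct and follows essentially the same route as the paper: rule out $a=b$ via Lemma~\ref{H loops} (the uniqueness of $\sim$-representatives in $H$), then apply the minimality in the definition of $s$ to the shift $a-b$. Your explicit case split on the sign of $c=a-b$ is just the symmetry the paper invokes with ``without loss of generality, $a>b$,'' so the two proofs are the same in substance.
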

\begin{proof}
By Lemma \ref{H loops}, we cannot have $(f_1,g_1) \sim (f_2,g_2)$, so our condition that $(f_1+a,g_1-a) \sim (f_2+b,g_2-b)$ implies $a \ne b$. Without loss of generality, let us assume that $a>b$.
\begin{gather*}
(f_2+b,g_2-b) \sim(f_1+a,g_1-a), \textnormal{ so}
\\(f_2,g_2) \sim(f_1+(a-b),g_1-(a-b)).
\end{gather*}
Since $(a-b)$ is a positive integer, by definition $s \le (a-b)$.
\end{proof}

\begin{lem}\label{canJump}
For any $i$ and any $x$, there must exist a $j$ satisfying $$(F(j),G(j)) \sim (F(i)+xs, G(i)-xs).$$
%
\end{lem}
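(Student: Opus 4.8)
The plan is to reduce the statement to the definition of $s$ and then ``bootstrap'' from the single-step case $x=1$ to arbitrary $x$ by iteration.

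First I would dispose of the case $x = 0$: here we need $j = i$, which works trivially since $(F(i),G(i)) \sim (F(i),G(i))$. Next, the heart of the argument is the case $x = 1$ (and, symmetrically, $x = -1$). By the definition of $s$, there exist $(f_1,g_1)$ and $(f_2,g_2)$ in $H$ with $(f_2,g_2) \sim (f_1+s,g_1-s)$. The idea is that adding $(s,-s)$ to \emph{any} pair of the form $(F(i),G(i))$ lands us (up to $\sim$) back on a pair of that same form. To see this, I would first use Lemma~\ref{H loops} to replace $(F(i),G(i))$ by the unique $(f,g) \in H$ similar to it; then I want to show $(f+s,g-s) \sim (F(j),G(j))$ for some $j$. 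The mechanism: $H$ is exactly $\{(F(i),G(i)) : 0 \le i < rk\}$, and by Remark (the one stating $F(i+k)=F(i)+t$, $G(i+k)=G(i)+u$) together with Lemma~\ref{noRepeats}, the set of pairs $\{(F(i),G(i)) : i \in \mathbb{Z}\}$ modulo $\sim$ is a group (cyclic of order $rk$) under the operation induced by $i \mapsto i+1$ — but the relevant structure here is that $H$, viewed mod $\sim$, is \emph{closed} under adding $(s,-s)$, precisely because $(f_1+s,g_1-s) \sim (f_2,g_2) \in H$ and the difference of any two elements of $H$ can be transported.

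Concretely, for $x=1$: given $i$, pick the $(f,g)\in H$ with $(f,g)\sim(F(i),G(i))$. Since $(f,g), (f_1,g_1) \in H$, Lemma~\ref{noRepeats} (applied to the indices realizing these pairs) shows $f - f_1$ and $g - g_1$ differ from a common multiple-of-$k$ shift; more usefully, there is an integer $n$ with $(f,g) \sim (F(i_1 + nk), G(i_1+nk)) = (f_1 + nt, g_1 + nu)$ where $i_1$ realizes $(f_1,g_1)$. Then $(f+s, g-s) \sim (f_1 + nt + s,\, g_1 + nu - s) \sim (f_2 + nt,\, g_2 + nu)$, and since $(f_2,g_2) = (F(i_2),G(i_2))$ for some $i_2$, this equals $(F(i_2+nk), G(i_2+nk))$, so $j = i_2 + nk$ works. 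The same computation with $s$ replaced by $-s$ handles $x = -1$ (swapping the roles of $(f_1,g_1)$ and $(f_2,g_2)$, since $(f_1,g_1)\sim(f_2 - s, g_2+s)$). Finally, for general $x$, I would induct on $|x|$: if $j'$ satisfies $(F(j'),G(j')) \sim (F(i)+(x-1)s, G(i)-(x-1)s)$, then applying the $x=1$ case to $j'$ gives $j$ with $(F(j),G(j)) \sim (F(j')+s,G(j')-s) \sim (F(i)+xs, G(i)-xs)$; negative $x$ is symmetric via the $x=-1$ step.

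The main obstacle I anticipate is bookkeeping the ``transport'' step cleanly — i.e., justifying that once we know $(f_1+s,g_1-s)\sim(f_2,g_2)$ for one particular pair in $H$, the same shift $(s,-s)$ carries \emph{every} pair $(F(i),G(i))$ to another such pair. The clean way to say it is: the map $i \mapsto i + k$ acts on pairs by $(F,G) \mapsto (F+t, G+u)$, every element of $H$ has the form $(F(i_1)+nt, G(i_1)+nu)$ for the specific $i_1$ realizing $(f_1,g_1)$ and some $n$ (by Lemma~\ref{noRepeats}, since all residues mod $rk$ are hit), and $(s,-s)$-translation commutes with $(+t,+u)$-translation; so closure at one point gives closure everywhere. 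Once that is phrased correctly the rest is routine substitution.
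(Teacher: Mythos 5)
There is a genuine gap in your ``transport'' step, and it is exactly the point your last paragraph flags as the anticipated obstacle. You claim that every element of $H$ is, up to $\sim$, of the form $(f_1+nt,\,g_1+nu)$ for the specific index $i_1$ realizing $(f_1,g_1)$, ``since all residues mod $rk$ are hit.'' This is false: the orbit of $(F(i_1),G(i_1))$ under index shifts by multiples of $k$ consists only of the pairs $(F(i_1+nk),G(i_1+nk))$, i.e.\ (by Lemma \ref{noRepeats}) only the $r$ elements of $H$ whose realizing indices are congruent to $i_1$ modulo $k$ --- not all $rk$ of them. Concretely, take $t=u=2$, $k=4$, $|C|=|D|=4$: then $(F(1),G(1))=(1,0)$, while $(F(0)+nt,\,G(0)+nu)=(2n,2n)$, and $(1,0)\sim(2n,2n)$ is impossible since $2n\not\equiv 1 \pmod 4$. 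So the chain $(f+s,g-s)\sim(f_1+nt+s,\,g_1+nu-s)\sim(f_2+nt,\,g_2+nu)$ cannot be run for an arbitrary $(f,g)\in H$, and your $x=\pm1$ case does not go through as written.

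The repair is to extract a \emph{universal} index shift from the defining instance of $s$, which is what the paper does and which makes the transport unnecessary. Let $i^*,j^*$ realize $(f_1,g_1),(f_2,g_2)$, so $(F(j^*),G(j^*))\sim(F(i^*)+s,\,G(i^*)-s)$. Since $|C|$ and $|D|$ are multiples of $k$, similarity gives congruences mod $k$ in each coordinate, and Lemma \ref{canAdd} ($F+G=\mathrm{id}$) then forces $j^*-i^*\equiv s+(-s)\equiv 0\pmod k$, so $j^*=i^*+nk$ for some $n$; comparing coordinates yields $(nt,nu)\sim(s,-s)$. Once you have this, for \emph{any} $i$ the single substitution $j=i+xnk$ gives $(F(j),G(j))=(F(i)+xnt,\,G(i)+xnu)\sim(F(i)+xs,\,G(i)-xs)$, handling all $x$ (positive, negative, or zero) in one line --- no induction on $|x|$ and no need to move $(F(i),G(i))$ into the orbit of $(f_1,g_1)$. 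Your instinct that ``$(s,-s)$-translation commutes with $(+t,+u)$-translation'' is the right one, but the correct way to exploit it is to convert the $(s,-s)$-shift into an index shift $i\mapsto i+nk$ once and for all, rather than to transport arbitrary elements of $H$ onto a single $(t,u)$-orbit, which is generally impossible.
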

\begin{proof}
From the definition of $s$, we know there must exist some $i^*,j^*$ such that $(F(j^*),G(j^*)) \sim (F(i^*)+s,G(i^*)-s)$. By Lemma \ref{canAdd}, 
\begin{align*}
i^*-j^* &= \left(F(i^*) + G(i^*)\right) - \left(F(j^*) + G(j^*)\right)
\\&= \left(F(i^*) - F(j^*)\right) + \left(G(i^*) - G(j^*)\right)
\\& \equiv (s) + (-s) \pmod k.
\end{align*}
Therefore, $i^*-j^*$ is a multiple of $k$, so we can write $j^* = i^* + nk$, which allows us to compute
\begin{gather*}
\phantom{\textnormal{ so}}(F(i^*) + nt, G(i^*) + nu) = (F(j^*),G(j^*)) \sim (F(i^*)+s,G(i^*)-s),\textnormal{ so}
\\ (nt,nu) \sim (s,-s).
\end{gather*}

Let $x$ and $i$ be given, and let $j = i + xnk$. Then, 
\begin{align*}(F(j),G(j)) &= (F(i + xnk),G(i + xnk))  \\&= (F(i) + xnt, G(i) +xnu) \\&\sim (F(i) +xs, G(i)-xs).\end{align*}
\end{proof}

\begin{cor}\label{W loops}
For any $a$, $\WEAVE_{a,-a}(C^t,D^u)$ and $\WEAVE_{a+s,-a-s}(C^t,D^u)$ are rotations of each other. 

Recall that we have defined $s$ to be the smallest positive integer for which there exist $(f_1,g_1)$ and $(f_2,g_2)$ in $H$ satisfying $(f_2,g_2) \sim (f_1+s,g_1-s)$,
where
$$H = \{(F(i),G(i))  : 0 \le i < rk \},$$
where $F$ and $G$ given by  
\begin{align*}
 F(nk+m) &= \begin{cases} 
nt+m & 0 \le m < t \\ 
(n+1)t& t\, \le m < k 
\end{cases}
\\G(nk+m) &= \begin{cases}
nu & 0 \le m <  t \\ 
nu + (m-t)& t\, \le m < k. 
\end{cases}
\end{align*}
\end{cor}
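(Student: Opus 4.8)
The plan is to show that $\WEAVE_{a,-a}(C^t,D^u)$ and $\WEAVE_{a+s,-a-s}(C^t,D^u)$ have the same length and that one is obtained from the other by shifting the starting index by some fixed amount, which is exactly what it means to be rotations. Both weaves have length $rk$ by the earlier remark, so lengths agree automatically. The substantive claim is that there is an integer shift $\delta$ such that for every index $i$, the symbol of $\WEAVE_{a+s,-a-s}$ at position $i$ equals the symbol of $\WEAVE_{a,-a}$ at position $i+\delta$.

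First I would recall the explicit symbol formula: using the $F,G$ notation, one has
$$\big(\WEAVE_{c,d}(C^t,D^u)\big)_i = \text{(a symbol of }C\text{ at index }c+F(i)\text{) or (a symbol of }D\text{ at index }d+G(i)\text{)},$$
and more precisely, the length-$k$ block satisfies $\Gamma\big((\WEAVE_{c,d})_i^k\big) = \Gamma(C^t_{c+F(i)}) \cup \Gamma(D^u_{d+G(i)})$; in fact the stronger fact that the block is literally determined by $C^t_{c+F(i)}$ and $D^u_{d+G(i)}$ (not just its $\Gamma$) is what I actually need, and this follows from the Remark giving $W^k_{nk+m}$ explicitly. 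Next I would invoke Lemma \ref{canJump} with $x=1$: for any $i$ there is a $j$ with $(F(j),G(j)) \sim (F(i)+s, G(i)-s)$. Tracking through the proof of Lemma \ref{canJump}, this $j$ has the form $j = i + nk$ for a \emph{fixed} $n$ independent of $i$ (namely the $n$ coming from $(nt,nu)\sim(s,-s)$). So set $\delta = nk$. Then for every $i$,
$$\big(\WEAVE_{a+s,-a-s}\big)_i^k \ \text{is determined by}\ C^t_{(a+s)+F(i)}, D^u_{(-a-s)+G(i)},$$
while
$$\big(\WEAVE_{a,-a}\big)_{i+\delta}^k \ \text{is determined by}\ C^t_{a+F(i+\delta)}, D^u_{-a+G(i+\delta)}.$$
Since $F(i+\delta) = F(i)+nt \equiv F(i)+s \pmod{|C|}$ (using $nt\equiv s$) and similarly $G(i+\delta) = G(i)+nu \equiv G(i)-s \pmod{|D|}$, the first-coordinate indices agree mod $|C|$ and the second mod $|D|$, so $C^t_{a+F(i+\delta)} = C^t_{(a+s)+F(i)}$ and $D^u_{-a+G(i+\delta)} = D^u_{(-a-s)+G(i)}$. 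Hence the symbol at position $i$ of $\WEAVE_{a+s,-a-s}$ equals the symbol at position $i+\delta$ of $\WEAVE_{a,-a}$, for all $i$, which together with the equal lengths gives the rotation claim.

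The main obstacle I anticipate is not any deep idea but bookkeeping: one must be careful that the congruences $nt \equiv s \pmod{|C|}$ and $nu \equiv -s \pmod{|D|}$ are precisely what the proof of Lemma \ref{canJump} delivers (the relation $(nt,nu)\sim(s,-s)$), and that the shift $\delta = nk$ works uniformly in $i$ — i.e., that $n$ does not depend on $i$ — which is why I would extract $n$ from the $i^*,j^*$ appearing in that proof rather than from Lemma \ref{canJump}'s conclusion directly. A secondary subtlety is making sure the per-symbol statement (not merely the per-$\Gamma$-of-block statement) is legitimate; this just requires citing the explicit formula for $W_{nk+m}$ rather than the $\Gamma$ version in Lemma \ref{gammaIs}, since rotations are about literal symbols. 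Once these points are pinned down the argument is a short substitution.
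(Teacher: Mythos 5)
Your proposal is correct and follows essentially the same route as the paper: it extracts the fixed $n$ with $(nt,nu)\sim(s,-s)$ from the proof of Lemma \ref{canJump}, and uses the explicit symbol formula together with $F(i+nk)=F(i)+nt$ and $G(i+nk)=G(i)+nu$ to show $\big(\WEAVE_{a+s,-a-s}(C^t,D^u)\big)_i = \big(\WEAVE_{a,-a}(C^t,D^u)\big)_{i+nk}$ for all $i$, which is exactly the paper's argument. Your cautionary points (that $n$ must not depend on $i$, and that one needs the literal symbol formula rather than the $\Gamma$-level statement of Lemma \ref{gammaIs}) are precisely the details the paper's proof also relies on.
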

\begin{proof}
As we saw in the proof of Lemma \ref{canJump}, there exists an $n$ which satisfies $(nt,nu) \sim (s,-s)$. Thus, for all $x$ and $y$, 
\begin{gather*}
F(x) + a+s \equiv F(x)+a+nt = F(x+nk) +a\pmod{|C|}
\\\,\,G(y) -a - s \equiv G(y)-a+nu = G(y+nk) -a\pmod{|D|},
\end{gather*}
so for any $i$, 
$$\big(\WEAVE_{a+s,-a-s}(C^t,D^u)\big)_i = \big(\WEAVE_{a,-a}(C^t,D^u)\big)_{i+nk}.$$
\end{proof}

\begin{theo} \label{H works}
For any $x$ and $y$, there is a unique $(f,g) \in H$ and a unique $a \in [s]$ satisfying $(f+a,g-a) \sim (x,y)$.
\end{theo}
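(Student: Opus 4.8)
The plan is to prove Theorem \ref{H works} by establishing existence and uniqueness separately, leaning on the structural lemmas about $H$ and $s$ already in hand. For \textbf{existence}, I would start from any convenient base point, say $(F(0),G(0)) = (0,0) \in H$. By Lemma \ref{H loops}, for every integer $i$ there is exactly one element of $H$ similar to $(F(i),G(i))$; combined with Lemma \ref{canJump}, which says that for any $i$ and any integer $x$ there is a $j$ with $(F(j),G(j)) \sim (F(i)+xs, G(i)-xs)$, this shows that the similarity classes $\{(f+xs, g-xs) : (f,g)\in H\}$ for $x \in \mathbb{Z}$ together with the shifts by $a \in [s]$ cover a lot of ground. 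Concretely, given $(x,y)$, I want to find $(f,g)\in H$ and $a\in[s]$ with $f \equiv x - a \pmod{|C|}$ and $g \equiv y + a \pmod{|D|}$. The idea is: first adjust by multiples of $k$ using $F(i+k) = F(i)+t$, $G(i+k)=G(i)+u$ to land in the right residue class mod $k$ (using Lemma \ref{canAdd}, $F(i)+G(i) = i$, which ties the mod-$k$ behavior of $F$ and $G$ together), then use the $s$-jumps from Lemma \ref{canJump} / Corollary \ref{W loops} to sweep out the remaining freedom, and finally absorb the leftover offset into the choice of $a \in [s]$.

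For \textbf{uniqueness}, suppose $(f_1+a_1, g_1-a_1) \sim (f_2+a_2, g_2-a_2)$ with $(f_1,g_1),(f_2,g_2)\in H$ and $a_1,a_2\in[s]$. If $(f_1,g_1) \ne (f_2,g_2)$, then Lemma \ref{noOverlaps} forces $|a_1 - a_2| \ge s$, which is impossible since $a_1, a_2 \in [s] = \{0,1,\dots,s-1\}$. Hence $(f_1,g_1) = (f_2,g_2)$, and then $(f_1+a_1,g_1-a_1)\sim(f_1+a_2,g_1-a_2)$ gives $a_1 \equiv a_2 \pmod{|C|}$ and $a_1 \equiv a_2 \pmod{|D|}$; since $0 \le a_1, a_2 < s$ and $s$ divides both $|C|$ and $|D|$ (indeed $s \le \lcm(|C|,|D|)$ by the footnote, and more to the point $s \mid \gcd(|C|,|D|)$ will follow from the structure — or one argues directly that $|a_1 - a_2|$ is a common multiple of... ) — actually the clean statement is just that $a_1 - a_2$ is divisible by both $|C|$ and $|D|$, hence by their lcm, which is at least $s > |a_1 - a_2|$ unless $a_1 = a_2$. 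So uniqueness reduces entirely to Lemma \ref{noOverlaps} plus this small modular observation.

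The main obstacle I anticipate is the \textbf{existence half}, specifically showing that the $s$-jumps together with the $a\in[s]$ shifts actually reach \emph{every} pair $(x,y)$, not just those in some sublattice. The counting framework suggests the right approach: there are $rk$ elements in $H$ and $s$ choices of $a$, giving $rks$ pairs $(f+a, g-a)$; if I can show these $rks$ pairs are pairwise non-similar (which is exactly the uniqueness part, already handled), and that $rks$ equals $|C|\cdot|D|$ — the total number of similarity classes — then existence follows by a pigeonhole/counting argument rather than an explicit construction. So the real content is the identity $rks = |C||D|$. This should come from analyzing the index of the subgroup of $\mathbb{Z}/|C| \times \mathbb{Z}/|D|$ generated by $(t,u)$ (which has to do with $r$) and the cyclic shift $(1,-1)$ (which has to do with $s$); I would set up the correspondence between $H$-classes and cosets carefully, verify that "jumping by $k$ in $i$" corresponds to adding $(t,u)$ and "jumping by $s$" corresponds to adding $(s,-s) \sim (nt, nu)$ for the $n$ from Lemma \ref{canJump}, and then conclude by comparing group orders. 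If the clean counting identity proves elusive, the fallback is the direct constructive argument sketched in the first paragraph, chaining Lemma \ref{canAdd}, the $F(i+k)$/$G(i+k)$ recurrences, and Lemma \ref{canJump}.
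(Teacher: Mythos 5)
Your uniqueness argument coincides with the paper's: distinct elements of $H$ are excluded by Lemma \ref{noOverlaps} because $a,b\in[s]$ forces $|a-b|<s$, and a repeated element with distinct shifts is excluded because $a_1-a_2$ would be a nonzero common multiple of $|C|$ and $|D|$ of absolute value below $s\le\lcm(|C|,|D|)$. For existence you genuinely diverge. The paper argues constructively: taking integer representatives, set $i=x+y$ and $\mu=G(i)-y$; Lemma \ref{canAdd} gives $F(i)+\mu=x$ \emph{exactly}, so the offset from $(F(i),G(i))$ to $(x,y)$ is literally of the anti-diagonal form $(\mu,-\mu)$; writing $\mu=a+(\text{a multiple of }s)$ with $a\in[s]$, Lemma \ref{canJump} absorbs the multiple of $s$ into a new index $j$, and Lemma \ref{H loops} replaces $(F(j),G(j))$ by an element of $H$. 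Your fallback sketch is this argument, but ``land in the right residue class mod $k$, then sweep'' blurs the one point that makes it work: choosing $i$ equal to $x+y$ on the nose (not merely mod $k$) is what guarantees the leftover offset is anti-diagonal, with no CRT condition modulo $\gcd(|C|,|D|)$ to worry about. Your primary route---counting $rk\cdot s$ pairwise non-similar pairs against $|C|\,|D|$ similarity classes---is a different and workable proof, but its load-bearing identity $rks=|C|\,|D|$ is only a plan in your write-up, and you cannot quote it from the paper's remark computing $s$, since that remark is derived from the Product Theorem, which rests on this very theorem. It can be proved directly: as in Lemma \ref{noRepeats}, $(f_2,g_2)\sim(f_1+s,g_1-s)$ forces the two elements of $H$ to come from indices differing by $nk$, so $s$ is the least positive integer with $nt\equiv s\pmod{|C|}$ and $nu\equiv-s\pmod{|D|}$ for some $n$; eliminating $n$ shows the attainable values form the subgroup of $\mathbb{Z}$ generated by $|C|u/k$ and $|D|t/k$, whence $s=\gcd(|C|u,|D|t)/k$ and $srk=|C|\,|D|$, after which your pigeonhole (using Lemma \ref{H loops} for the $rk$ pairwise non-similar elements of $H$) closes the argument. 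The trade-off: the paper's construction is shorter and needs no new computation, while your route delivers the formula for $s$ as a by-product instead of obtaining it afterwards from the Product Theorem.
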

\begin{proof}
By Lemma \ref{noOverlaps}, for any $a$ and $b$ in $[s]$, there cannot be two different elements $(f_1,g_1), (f_2,g_2) \in H$ satisfying $(f_1 +a,g_1-a) \sim (f_2 +b,g_2-b)$. In addition, if $a,b\in[s]$ are distinct, $(f+a,g+a) \not \sim (f+b,g+b)$. 
Thus, if some $a\in [s]$ and $(f,g)\in H$ satisfy the conditions of this theorem, they do so uniquely.

Let $i =  x + y$, and let $\mu = G(i) - y$.  By Lemma \ref{canAdd} $F(i)+G(i) = i$, so 
$$F(i) +\mu = F(i) + G(i) - y = i - y = x.$$
Thus, $(F(i) +\mu, G(i)-\mu) \sim (x,y)$.

Let $a$ be the value satisfying $a\in[s]$ and $a \equiv \mu \pmod{s}$. By Lemma \ref{canJump}, there exists some $j$ for which $(F(j),G(j))\sim(F(i) + (\mu-a), G(i) - (\mu-a))$. Then,
$$(F(j)+a,G(j)-a) \sim (F(i)+\mu,G(i)-\mu)\sim (x,y) .$$

By Lemma \ref{H loops}, there exists $(f,g)\in H$ satisfying $(f,g) \sim (F(j),G(j))$, so\\  $(f+a,g-a) \sim (x,y)$.
\end{proof}

Let $H^*$ denote the set of ordered pairs $\{ (f+a,g-a) : (f,g)\in H, a \in [s] \}$, and let $J$ denote the set of ordered pairs $\{ (x,y) : x\in [|C|], y \in [|D|] \}$.
\begin{cor} \label{H bijection}
There is a bijection between $H^*$ and $J$ which maps ordered pairs to similar ordered pairs.
\end{cor}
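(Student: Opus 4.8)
## Proof proposal for Corollary~\ref{H bijection}

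The plan is to exhibit the bijection explicitly and check that it is well-defined, injective, and surjective, using Theorem~\ref{H works} as the workhorse. The natural candidate map goes from $J$ to $H^*$: given $(x,y)\in J$, Theorem~\ref{H works} produces a unique $(f,g)\in H$ and a unique $a\in[s]$ with $(f+a,g-a)\sim(x,y)$; send $(x,y)$ to that element $(f+a,g-a)\in H^*$. By construction this map always lands on a pair similar to $(x,y)$, so once I show it is a bijection the ``maps similar pairs to similar pairs'' clause is automatic.

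First I would check surjectivity, which is essentially immediate: every element of $H^*$ has the form $(f+a,g-a)$ for some $(f,g)\in H$ and $a\in[s]$, and I need a preimage $(x,y)\in J$. Take $(x,y)$ to be the unique representative in $[|C|]\times[|D|]$ of the similarity class of $(f+a,g-a)$ — this exists and is unique precisely because $\sim$ is reduction mod $|C|$ in the first coordinate and mod $|D|$ in the second. Then Theorem~\ref{H works} applied to this $(x,y)$ returns a unique $(f',g')\in H$, $a'\in[s]$ with $(f'+a',g'-a')\sim(x,y)\sim(f+a,g-a)$; I must argue $(f'+a',g'-a') = (f+a,g-a)$, i.e. that the map sends $(x,y)$ back to the element we started with. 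This is where the uniqueness half of Theorem~\ref{H works} does the work, but there is a subtlety: uniqueness in Theorem~\ref{H works} is stated for a fixed target $(x,y)$, so I need that $(f+a,g-a)$ and $(f'+a',g'-a')$, being both similar to the same $(x,y)$, must coincide — which follows by applying the uniqueness clause with that common $(x,y)$.

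For injectivity, suppose $(x_1,y_1)$ and $(x_2,y_2)$ in $J$ map to the same $(f+a,g-a)\in H^*$. Then $(x_1,y_1)\sim(f+a,g-a)\sim(x_2,y_2)$, so $(x_1,y_1)\sim(x_2,y_2)$; but two elements of $J=[|C|]\times[|D|]$ that are similar must be equal, since each similarity class has exactly one representative with coordinates in the prescribed ranges. Hence the map is injective. Combining, we get a bijection $J\to H^*$, and its inverse is the bijection $H^*\to J$ asserted in the statement; since every pair involved is related by $\sim$ to its image, the bijection maps ordered pairs to similar ordered pairs.

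I expect the main obstacle to be purely bookkeeping: reconciling the direction in which Theorem~\ref{H works} is phrased (inputs in $J$, outputs a pair $(f,g)\in H$ together with $a\in[s]$) with the direction the corollary wants (a bijection ``between'' $H^*$ and $J$), and making sure the uniqueness clause of Theorem~\ref{H works} is invoked with a correctly fixed target so that it actually pins down the element of $H^*$. No real computation is needed; the only genuine facts used are Theorem~\ref{H works} and the observation that $\sim$-classes have unique representatives in $[|C|]\times[|D|]$.
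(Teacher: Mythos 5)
Your proposal is correct and uses exactly the ingredients of the paper's proof: Theorem \ref{H works} plus the fact that each similarity class has a unique representative in $J=[|C|]\times[|D|]$. The paper simply defines the map in the opposite direction (from $H^*$ to $J$, sending each pair to its similar representative) and cites the existence-and-uniqueness of Theorem \ref{H works} to conclude bijectivity at once, so your argument is essentially the same, just phrased via the inverse map with injectivity and surjectivity checked separately.
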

\begin{proof}
Let $B:H^* \rightarrow J$ be a map which takes any ordered pair in $H^*$ to the element of $J$ which it is similar to. By  Theorem \ref{H works}, for any $ (x,y)\in J$, there is exactly one element $(f+a,g-a) \in H^*$ such that $B((f+a,g-a)) = (x,y)$, so $B$ must be a bijection.
\end{proof}

We can finally prove the Product Theorem.

\begin{named}[Product Theorem]
Let $C$ and $D$ be any cycles for which $|C|$ and $|D|$ are both multiples of $k=t+u$. 

Let $s$ be the smallest positive integer for which there exist $(f_1,g_1)$ and $(f_2,g_2)$ in $H$ satisfying $(f_2,g_2) \sim (f_1+s,g_1-s)$,
where
$$H = \{(F(i),G(i))  : 0 \le i < rk \},$$
where $F$ and $G$ given by  
\begin{align*}
 F(nk+m) &= \begin{cases} 
nt+m & 0 \le m < t \\ 
(n+1)t& t\, \le m < k 
\end{cases}
\\G(nk+m) &= \begin{cases}
nu & 0 \le m <  t \\ 
nu + (m-t)& t\, \le m < k. 
\end{cases}
\end{align*}

Then, 
$$\Gamma \left(R^t(C)\right) \times \Gamma \left(R^u (D)\right) = \bigcup_{a=0}^{s-1} \Gamma\left(R^{t+u}\big( \WEAVE_{a,-a}(C^t,D^u) \big)\right).$$
\end{named}
\begin{proof}
We know from the discussion preceding Lemma \ref{canAdd} that 
$$\Gamma \Big(R^k\big( \WEAVE_{c,d}(C^t,D^u) \big)\Big)
 =  \Big\{\Gamma\big(C^t_{f+c}\big) \cup \Gamma\big(D^u_{g+d}\big) : (f,g) \in H\Big\}.$$ It follows that
$$\bigcup_{a=0}^{s-1} \Gamma\left(R^{t+u}\big( \WEAVE_{a,-a}(C^t,D^u) \big)\right) =
\Big\{\Gamma\big(C^t_{f}\big) \cup \Gamma\big(D^u_{g}\big) : (f,g) \in H^*\Big \}.
$$
If $(f,g) \sim (x,y)$ then $\Gamma\big(C^t_{f}\big) \cup \Gamma\big(D^u_{g}\big) = \Gamma\big(C^t_{x}\big) \cup \Gamma\big(D^u_{y}\big)$, so by Corollary \ref{H bijection}, 
\begin{align*}
\Big\{\Gamma\big(C^t_{f}\big) \cup \Gamma\big(D^u_{g}\big) : (f,g) \in H^*\Big\}
&=\Big\{\Gamma\big(C^t_{x}\big) \cup \Gamma\big(D^u_{y}\big) : (x,y) \in J\Big\}
\\&= \Big\{\Gamma\big(C^t_{x}\big): x \in [|C|]\Big\} \times \Big\{\Gamma\big(D^u_{y}\big) : y\in[|D|]\Big\}
\\&= \Gamma\big(R^t(C)\big) \times \Gamma\big(R^u(D)\big).
\end{align*}
\end{proof}

\begin{rem}
An application of the Product Theorem shows that 
$$\left|\Gamma \left(R^t(C)\right) \times \Gamma \left(R^u (D)\right) \right|= \left|\bigcup_{a=0}^{s-1} \Gamma\left(R^{t+u}\big( \WEAVE_{a,-a}(C^t,D^u) \big)\right)\right|,$$
so $$|C| \cdot |D| = s \cdot \left|  \WEAVE_{a,-a}(C^t,D^u) \right| = srk = sk \frac{\lcm(|C|u, |D|t)}{t u}.$$
Therefore, we can explicitly compute $$ s = \frac{\gcd(|C|u, |D|t)}{k}.  $$
\end{rem}

\section{Benign Cycles}\label{pre-last}
The Product Theorem show that we can construct a class of cycles 
$$\mathscr{C} =  \big\{\WEAVE_{a,-a}(C^t,D^u): a \in [s] \big\}$$
with the property that $$\bigcup_{E \in \mathscr{C}}  \Gamma\Big(R^k(E)\Big) = \Gamma \left(R^t(C)\right) \times \Gamma \left(R^u (D)\right). $$
However, this is still of little use to us as long as $|\mathscr{C}|$ is large. In this section, we will introduce a method which will allow us to drastically reduce the cardinality of $|\mathscr{C}|$ when the cycle $|C|$ is $(t,t+u)$-benign. This will leave us with sufficiently few cycles so that we can eventually use cycle addition to construct our universal cycle. 

\subsection{Definition}
We say that a cycle $C$ is \textbf{(t,k)-benign} if for some $\Delta$ relatively prime to $|C|$ and some $i$, $C^{t-1}_i = C^{t-1}_{i+k\Delta}$. If $C$ is also a universal cycle on $S$, we would say that $C$ is a $(t,k)$-benign universal cycle on $S$.

\subsection{Examples}
For example, the cycle $C = abcdaeed$ is (3,4)-benign since $C^2_3 = da = C^2_7$ and $\frac{7-3}{4} = 1$ is an integer relatively prime to $|C| = 8$.

\subsection{Application}
As usual, let $k = t+u$.
\begin{lem} \label{benignA}
If $C$ and $D$ are cycles with lengths divisible by $k$ and $C$ satisfies $C^{t-1}_i = C^{t-1}_{i+k\Delta}$,  then for any $a$, we can find a $k$-sum $$\WEAVE_{a,-a}(C^t,D^u)\oplus^k\WEAVE_{a+u\Delta,-a-u\Delta}(C^t,D^u).$$
\end{lem}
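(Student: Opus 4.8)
The plan is to exhibit an explicit common $(k-1)$-substring of the two WEAVEs and then invoke the definition of $k$-sum. Recall that a $k$-sum of two cycles exists precisely when their $(k-1)$-ranges intersect (the Remark before Lemma~2), so it suffices to produce one length-$(k-1)$ string lying in the $(k-1)$-range of both $\WEAVE_{a,-a}(C^t,D^u)$ and $\WEAVE_{a+u\Delta,-a-u\Delta}(C^t,D^u)$. The natural candidate is built from the shared piece $C^{t-1}_i = C^{t-1}_{i+k\Delta}$ together with a matching block of $D$; I would look at the length-$(k-1)$ substring of a WEAVE that begins one symbol into a ``$C$-block'', namely something of the form $C^{t-1}_{\bullet}\cdot D^u_{\bullet}$, using the explicit formula for $W^{k}_{nk+m}$ (and its $\Gamma$-free string version in the Remark preceding Lemma~\ref{gammaIs}) specialized to $m=1$.

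Concretely, first I would write down, for $W=\WEAVE_{a,-a}(C^t,D^u)$, the length-$k$ substring $W^k_{nk}=C^t_{a+nt}\cdot D^u_{-a+nu}$ and more generally the substring starting at $nk+1$, which has the form $C^{t-1}_{a+nt+1}\cdot D^u_{-a+nu}\cdot C^{1}_{a+(n+1)t}$; dropping the last symbol gives the $(k-1)$-substring $C^{t-1}_{a+nt+1}\cdot D^u_{-a+nu}$, which lies in $R^{k-1}(W)$. Second, I would do the same for $W'=\WEAVE_{a+u\Delta,-a-u\Delta}(C^t,D^u)$, getting the $(k-1)$-substring $C^{t-1}_{a+u\Delta+n't+1}\cdot D^u_{-a-u\Delta+n'u}$ in $R^{k-1}(W')$ for any $n'$. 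Third, I would choose the indices to force these two strings to be literally equal: picking $n' = n-\Delta$ makes the $D$-part index $-a-u\Delta+(n-\Delta)u = -a+nu - u\Delta$... so instead I should track the offsets carefully and choose $n'$ so that the $C$-index shifts by exactly $k\Delta$ (using $C^{t-1}_i=C^{t-1}_{i+k\Delta}$) while the $D$-index is unchanged. Since the $C$-block index in $W$ is $a+nt+1$ and in $W'$ is $(a+u\Delta)+n't+1$, and the $D$-block index is $-a+nu$ versus $(-a-u\Delta)+n'u$, setting $n' = n + \Delta$ changes the $C$-index by $u\Delta + \Delta t = k\Delta$ (good, using benignity) and the $D$-index by $-u\Delta + \Delta u = 0$ (good). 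Then with $n$ chosen so that $a+nt+1 = i$ (or rather so the $C$-index equals $i$ modulo $|C|$, which is possible since we only care about the string $C^{t-1}$ and can also absorb slack by choosing $n$ appropriately, or simply quote that $i$ was arbitrary), the two $(k-1)$-substrings coincide, giving a common element of the two $(k-1)$-ranges.

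Finally, having produced a common $(k-1)$-string $S$, I would rotate each WEAVE to start with $S$ and concatenate, which by definition is a $k$-sum $\WEAVE_{a,-a}(C^t,D^u)\oplus^k\WEAVE_{a+u\Delta,-a-u\Delta}(C^t,D^u)$; this completes the proof.

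The main obstacle I anticipate is purely bookkeeping: making sure the index arithmetic for $F$, $G$ and the raw substring formulas is done modulo the right quantities ($|C|$ for the $C$-indices, $|D|$ for the $D$-indices, and $|W|=rk$ for positions within the WEAVE), and in particular verifying that one can always find an integer $n$ realizing the required starting index $i$ given that the WEAVE only has $r$ many $C$-blocks — here I would use that the formula $W^k_{nk+m}=C^t_{c+nt}\cdot D^u_{d+nu}$ is asserted to hold for \emph{all} integers $n$ (the Remark explicitly notes this), so no genuine divisibility issue arises and the shift $n\mapsto n+\Delta$ is always legitimate. The conceptual content — that benignity of $C$ lets the ``$C$-part'' of two differently-phased WEAVEs agree over a full $(t-1)$-block while the ``$D$-part'' stays put — is straightforward once the indexing is set up.
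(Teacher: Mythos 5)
Your overall strategy --- produce one common $(k-1)$-string in the two weaves' $(k-1)$-ranges and invoke the remark that intersecting $(k-1)$-ranges give a $k$-sum --- is the paper's strategy, and your phase bookkeeping (taking $n'=n+\Delta$ shifts the $C$-index by $u\Delta+t\Delta=k\Delta$ and leaves the $D$-index fixed) is the same computation the paper does via the rotated cycle $\Cbar$. The gap is in how you place the window. You only use windows starting at offset $m=1$ inside a $C$-block, i.e.\ strings $C^{t-1}_{a+nt+1}\cdot D^u_{-a+nu}$. As $n$ ranges over $\mathbb{Z}$, the index $a+nt+1$ runs only over the coset $a+1+t\mathbb{Z}$ modulo $|C|$, i.e.\ over a single residue class modulo $\gcd(t,|C|)$. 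But benignity gives $C^{t-1}_i=C^{t-1}_{i+k\Delta}$ only at one specific $i$ fixed by the hypothesis --- your parenthetical ``simply quote that $i$ was arbitrary'' is not available, and ``absorbing slack in $n$'' cannot help since $n$ moves the index only in steps of $t$. Nothing forces $i\equiv a+1\pmod{\gcd(t,|C|)}$, and the obstruction is real in exactly the case the Main Theorem needs: there the lemma is used with $t=u=2$, $k=4$, $4\mid|C|$, so $\gcd(t,|C|)=2$ and your windows reach only indices of one parity; if $i$ has the other parity your argument produces no common string. Your anticipated ``no genuine divisibility issue arises'' (because the substring formula holds for all integers $n$) addresses the wrong worry: the problem is not the range of $n$ but that $nt$ only sweeps multiples of $\gcd(t,|C|)$ modulo $|C|$.

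The repair is to let the window start at any offset $m\in\{1,\dots,t\}$ within a $C$-block: the $(k-1)$-substring of $\WEAVE_{a,-a}(C^t,D^u)$ beginning at $nk+m$ is $C^{t-m}_{a+nt+m}\cdot D^u_{-a+nu}\cdot C^{m-1}_{a+(n+1)t}$, so its $C$-characters, read in order, still form the consecutive string $C^{t-1}_{a+nt+m}$, merely wrapped around the full $D$-block $D^u_{-a+nu}$. Since $nt+m$ with $n\in\mathbb{Z}$, $m\in\{1,\dots,t\}$ realizes every residue modulo $|C|$, you can arrange $a+nt+m\equiv i\pmod{|C|}$; the window at $(n+\Delta)k+m$ in $\WEAVE_{a+u\Delta,-a-u\Delta}(C^t,D^u)$ then has the same shape, $C$-content $C^{t-1}_{i+k\Delta}$, and the identical $D$-block, so benignity gives literal equality of the two $(k-1)$-strings and hence the desired $k$-sum. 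This is exactly the paper's proof: it packages the same idea by rotating $C$ forward by $k\Delta$ to form $\Cbar$ and noting that $\WEAVE_{a+u\Delta,-a-u\Delta}(C^t,D^u)$ is a rotation of $\WEAVE_{a,-a}(\Cbar^t,D^u)$, with the phrase ``$C^{t-1}_i$ interspersed in some way with $u$ characters from $D$'' doing precisely the work that your fixed choice $m=1$ omits.
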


\begin{proof}

Let $\Cbar$ denote the rotation of $C$ forward by $k \Delta$ spaces, so $\Cbar$ satisfies $\Cbar_x = C_{x+k\Delta}$ for all $x$. Notice that $\Cbar^{t-1}_i = C^{t-1}_{i+k\Delta} = C^{t-1}_i$.

For some $j$, $\big(\WEAVE_{c,d}(C^t,D^u)\big)^{k-1}_j$ consists of $C^{t-1}_i$ interspersed in some way with $u$ characters from $D$. But that means $\big(\WEAVE_{c,d}(\Cbar^t,D^u)\big)^{k-1}_j$ will consist of  $\Cbar^{t-1}_i$ interspersed in the same way with the same $u$ characters from $D$. Since  $\Cbar^{t-1}_i = C^{t-1}_i$, 
$$\big(\WEAVE_{c,d}(\Cbar^t,D^u)\big)^{k-1}_j = \big(\WEAVE_{c,d}(C^t,D^u)\big)^{k-1}_j.$$

We know from section \ref{Cycle Multiplication} that 
\begin{gather*}
\big(\WEAVE_{c,d}(C^t,D^u)\big)_{nk + m} = \begin{cases}C_{c+nt+m} & 0 \le m < t \\ 
D_{d+nu+(m-t)} & t \le m < k   \end{cases}, \textnormal{ so}
\\
\begin{aligned}
\big(\WEAVE_{c+u\Delta,d-u\Delta}(C^t,D^u)\big)_{nk + m}
&= \begin{cases}C_{c+u\Delta+nt+m} & 0 \le m < t \\ 
D_{d-u\Delta+nu+(m-t)} & t \le m < k   \end{cases}
\\&= \begin{cases}\Cbar_{c+(n-\Delta)t+m} & 0 \le m < t \\ 
D_{d+(n-\Delta)u+(m-t)} & t \le m < k   \end{cases}
\\&=\big(\WEAVE_{c,d}(\Cbar^t,D^u)\big)_{(n-\Delta)k + m}.
\end{aligned}
\end{gather*}
Therefore, we can conclude that 
\begin{align*}
\big(\WEAVE_{c+u\Delta,d-u\Delta}(C^t,D^u)\big)^{k-1}_{j+k\Delta} 
&=\big(\WEAVE_{c,d}(\Cbar^t,D^u)\big)^{k-1}_{j}
\\&=\big(\WEAVE_{c,d}(C^t,D^u)\big)^{k-1}_j,
\end{align*}
so for any $c$ and $d$, we can find a $k$-sum $$\WEAVE_{c,d}(C^t,D^u)\oplus^k\WEAVE_{c+u\Delta,d-u\Delta}(C^t,D^u).$$

By setting $d=-c$, this reduces to the result we were looking for.
\end{proof}

\begin{lem}\label{benignB}
If $C$ and $D$ are cycles with lengths divisible by $k$, $C$ is a $(t,k)$-benign cycle, and $ s = \frac{\gcd(|C|u, |D|t)}{k}  $, then there exists a partition of $$\mathscr{C} =  \big\{\WEAVE_{a,-a}(C^t,D^u): a \in [s] \big\}$$ into $\gcd(u,s)$ multisets $\mathscr{C}_i$ such that 
\begin{enumerate}
\item For any $a\in[s]$, if $i \in [\gcd(u,s)]$ and $a$ is equivalent to $i$ modulo $\gcd(u,s)$,  $\WEAVE_{a,-a}(C^t,D^u) \in \mathscr{C}_i$, and
\item Each $\mathscr{C}_i$ is $k$-summable.
\end{enumerate}
\end{lem}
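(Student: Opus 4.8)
The plan is to use Lemma~\ref{benignA} as the fundamental bridging step and then argue that the $\WEAVE_{a,-a}$'s, indexed by $a \in [s]$, can be linked into $\gcd(u,s)$ addition-chains according to the residue of $a$ modulo $\gcd(u,s)$. First I would define, for each $i \in [\gcd(u,s)]$, the multiset $\mathscr{C}_i = \big\{\WEAVE_{a,-a}(C^t,D^u) : a \in [s],\ a \equiv i \pmod{\gcd(u,s)}\big\}$; property~(1) then holds by construction, and these multisets partition $\mathscr{C}$ because every $a \in [s]$ lies in exactly one residue class mod $\gcd(u,s)$. The real content is property~(2): each $\mathscr{C}_i$ is $k$-summable.

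To prove $k$-summability of $\mathscr{C}_i$, recall from the discussion in Section~\ref{Cycle Addition} that a set of cycles is $k$-summable iff its ``$k$-sum graph'' (vertices = cycles, edges = pairs admitting a $k$-sum) is connected. Lemma~\ref{benignA} gives us, for every $a$, an edge between $\WEAVE_{a,-a}$ and $\WEAVE_{a+u\Delta,-a-u\Delta}$. Also, by Corollary~\ref{W loops}, $\WEAVE_{a,-a}$ and $\WEAVE_{a+s,-a-s}$ are rotations of one another, hence identical as cycles (and trivially $k$-summable with each other, or rather: the index $a$ is only meaningful modulo $s$). So within $\mathscr{C}_i$ the edge from Lemma~\ref{benignA} connects index $a$ to index $a + u\Delta \pmod s$. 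Since $\Delta$ is relatively prime to $|C|$ and $k \mid |C|$... actually the key arithmetic fact I need is that repeatedly adding $u\Delta$ modulo $s$ generates exactly the coset of $a$ in $\mathbb{Z}/s\mathbb{Z}$ under the subgroup $\langle u\Delta \rangle = \langle \gcd(u\Delta, s)\rangle$, and I must check this subgroup equals $\langle \gcd(u,s)\rangle$, i.e. that $\gcd(u\Delta, s) = \gcd(u, s)$. This is where $\gcd(\Delta, |C|) = 1$ enters: since $s = \gcd(|C|u, |D|t)/k$ divides $|C|u/k$, and more carefully $s \mid |C| \cdot (\text{something})$, one shows every prime power dividing $s$ and dividing $\Delta$ would have to divide... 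I would nail this down by the standard lemma: if $\gcd(\Delta, N) = 1$ and $s \mid N \cdot u$ for the relevant $N$, then $\gcd(u\Delta, s) = \gcd(u, s)$. Concretely $s \mid |C|u/k$ implies... hmm, I should instead observe $s \mid \lcm(|C|u,|D|t)/k$ is false; rather $s = \gcd(|C|u,|D|t)/k$ divides $|C|u/k$, and since $\gcd(\Delta,|C|)=1$ we get $\gcd(\Delta, |C|u/k)$ divides $\gcd(\Delta,|C|) \cdot (\text{a divisor of } u)$... so $\gcd(\Delta, s) \mid u^\infty$-part, giving $\gcd(u\Delta,s) = \gcd(u,s)$. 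That orbit computation is the main obstacle.

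Granting that arithmetic fact, the orbit of $a$ under the map $x \mapsto x + u\Delta \pmod s$ is precisely $\{x \in [s] : x \equiv a \pmod{\gcd(u,s)}\}$, which is exactly the index set of $\mathscr{C}_i$. Hence Lemma~\ref{benignA} provides edges linking all elements of $\mathscr{C}_i$ along a single cycle through that orbit, so the $k$-sum graph restricted to $\mathscr{C}_i$ is connected, i.e. $\mathscr{C}_i$ is $k$-summable. One subtlety I would address: Lemma~\ref{benignA} produces a $k$-sum of $\WEAVE_{a,-a}$ with $\WEAVE_{a+u\Delta,-a-u\Delta}$ where the second index is taken over the integers, but by Corollary~\ref{W loops} reducing that index modulo $s$ only replaces the cycle by a rotation of itself, which does not affect the existence of a $k$-sum (rotations have the same $(k-1)$-range, and $k$-summability depends only on $(k-1)$-ranges via the Remark following Corollary~\ref{sumsWork2}). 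So the edge really does live inside $\mathscr{C}_i$.

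\textbf{Main obstacle.} The crux is the number-theoretic identity $\gcd(u\Delta, s) = \gcd(u, s)$, which is what makes the step size $u\Delta$ generate the same subgroup of $\mathbb{Z}/s\mathbb{Z}$ as $u$ does; it relies essentially on $\gcd(\Delta, |C|) = 1$ together with the explicit form $s = \gcd(|C|u, |D|t)/k$. Everything else is bookkeeping: translating ``$k$-summable'' into connectivity of the $k$-sum graph, invoking Lemma~\ref{benignA} for the edges, and invoking Corollary~\ref{W loops} to keep those edges inside a single residue class.
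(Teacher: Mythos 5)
Your proposal follows the paper's proof almost line for line: the same partition of $\mathscr{C}$ into residue classes modulo $\gcd(u,s)$, the same edges supplied by Lemma~\ref{benignA}, the same appeal to Corollary~\ref{W loops} to reduce the index $a+u\Delta$ modulo $s$ (legitimate, since rotations preserve $(k-1)$-ranges), and the same connectivity criterion for $k$-summability from Section~\ref{Cycle Addition}. The only divergence is the arithmetic step you flag as the main obstacle. The paper does not prove $\gcd(u\Delta,s)=\gcd(u,s)$; it instead asserts that $\Delta$ is relatively prime to $s$, takes an inverse $\Deltabar$ of $\Delta$ modulo $s$, writes $b-a=yu+zs$ for two indices in the same class, and rewrites $b\equiv a+(y\Deltabar)(u\Delta)\pmod{s}$, which exhibits the required chain of $u\Delta$-steps. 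Your gcd identity is true, and the sketch you give is essentially the right reason; to finish it, argue prime by prime: if a prime $p$ divides both $\Delta$ and $s$, then $p\nmid|C|$ because $\gcd(\Delta,|C|)=1$, while $sk=\gcd(|C|u,|D|t)$ divides $|C|u$, so the power of $p$ in $s$ is at most its power in $u$; hence $\gcd(u\Delta,s)=\gcd(u,s)$, and the orbit of $a$ under $x\mapsto x+u\Delta \pmod{s}$ is exactly the class of $a$ modulo $\gcd(u,s)$, which is the index set of $\mathscr{C}_i$. Your variant even buys a little robustness: it uses only $sk\mid |C|u$, whereas the paper's shortcut leans on the claim that $s$ divides $|C|$, which does not follow from the stated hypotheses in general (though it holds in the paper's applications). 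So there is no missing idea in your plan; the only thing left to do is to write out that valuation argument cleanly in place of the ``$u^\infty$-part'' hand-waving.
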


\begin{proof}
Let $W_a$ denote $\WEAVE_{a,-a}(C^t,D^u)$.

Since $C$ is a $(t,k)$-benign cycle, we can find $\Delta$ relatively prime to $|C|$ and $i$ such that $C^{t-1}_i = C^{t-1}_{i+k\Delta}$. 
By Lemma \ref{benignA}, for any $a$ there exists a $k$-sum $W_a\oplus^k W_{a+u\Delta}$.
By Corollary \ref{W loops},  $W_a$ and $W_{a+s}$ are equivalent up to rotation for any $a$, so if  $a$ and $b$ satisfy the relation $b \equiv a +u\Delta \pmod {s}$, we can take the $k$ sum of $W_{a}$ and $W_{b}$.

Since $\Delta$ is relatively prime to $|C|$ and $s$ divides $|C|$, $\Delta$ must be relatively prime to $s$.  Thus, there must exist a value $\Deltabar$ satisfying $\Delta \Deltabar \equiv 1 \pmod {s}$.

Let $\mathscr{C}_i$ be the multiset of $W_a$ for which $a- i$ is a multiple of $\gcd(u,s)$. Notice that $\{ \mathscr{C}_1,\mathscr{C}_2,\cdots,\mathscr{C}_{\gcd(u,s)} \}$ is a partition of $\mathscr{C}$, and $W_a \in C_i$ for any $a$ equivalent to $i$ modulo $\gcd(u,s)$.

For any $W_a, W_b \in \mathscr{C}_i$, $a$ is equivalent to $b$ modulo  $\gcd(u,s)$. Since any multiple of $\gcd(u,s)$ can be written as an integer linear combination of $u$ and $s$, and $b-a$ is a multiple of $\gcd(u,s)$, there must exist integers $y,z$ such that $b-a = yu + zs$. Therefore, 
 $$b = a + y u + z s \equiv a + y u \equiv a + (y\Deltabar)(u\Delta) \pmod{s}.$$ 
If we let $a_x = a + x u\Delta$,  we get $a_0 = a$, $a_{y  \raisebox{-1pt}{$\scriptstyle {\hspace{1pt}\overline{\hspace{-1pt}\Delta\hspace{-1pt}}\hspace{1pt}}$}   } = b$, and $a_{i+1} \equiv a_{i}+u\Delta  \pmod {s}$. By the last condition, we can take a $k$-sum of $W_{a_i}$ and $W_{a_{i+1}} = W_{(a_{i} + u\Delta)}$, so by the criterion established in Section \ref{Cycle Addition},  $\mathscr{C}_i$ must be $k$-summable.
\end{proof}

\begin{cor}\label{benignC}
If $C$ and $D$ are cycles having lengths divisible by $k$ and $C$ is $(t,t+u)$-benign, then there exist $x \le u$ multisets $\mathscr{C}_i$ such that
\begin{enumerate}
\item For any $a$, if $i \in [x]$ and $i \equiv a \pmod x$,  $\WEAVE_{a,-a}(C^t,D^u) \in \mathscr{C}_i$, 
\item Each $\mathscr{C}_i$ is $k$-summable, and
\item If we let $\mathscr{C}$ denote $\bigcup_{i=0}^{x-1}\mathscr{C}_i $,
$$\Gamma \big(R^t(C)\big) \times \Gamma \big(R^{u} (D)\big) = \bigcup_{E \in  \mathscr{C} }\Gamma \big( R^{t+u}(E) \big).$$

\end{enumerate}
\end{cor}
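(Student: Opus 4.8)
The plan is to assemble Corollary~\ref{benignC} by feeding the conclusions of Lemma~\ref{benignB} through the computation of $s$ recorded in the remark following the Product Theorem, and then repackaging the resulting partition so that the index set has size at most $u$. Concretely, set $s = \frac{\gcd(|C|u,\,|D|t)}{k}$ as in that remark, and observe that $\gcd(u,s)$ divides $u$; so putting $x = \gcd(u,s)$ immediately gives $x \le u$, which is the bound demanded in the statement. Lemma~\ref{benignB} already supplies a partition $\{\mathscr{C}_0,\ldots,\mathscr{C}_{x-1}\}$ of $\mathscr{C} = \{\WEAVE_{a,-a}(C^t,D^u) : a \in [s]\}$ into $x = \gcd(u,s)$ multisets, each $k$-summable, with $\WEAVE_{a,-a}(C^t,D^u) \in \mathscr{C}_i$ whenever $a \equiv i \pmod{x}$. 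That is exactly conditions (1) and (2), so these two are inherited verbatim; the only care needed is to reconcile the indexing convention (Lemma~\ref{benignB} indexes $\mathscr{C}_i$ for $i \in [\gcd(u,s)] = \{0,\ldots,\gcd(u,s)-1\}$, matching the $i \in [x]$, $i \equiv a \pmod x$ phrasing here).

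For condition (3), the key point is that $\bigcup_{i=0}^{x-1}\mathscr{C}_i = \mathscr{C} = \{\WEAVE_{a,-a}(C^t,D^u) : a \in [s]\}$ as multisets, since the $\mathscr{C}_i$ partition $\mathscr{C}$. Then I would simply invoke the Product Theorem, which asserts precisely that
$$\Gamma\left(R^t(C)\right) \times \Gamma\left(R^u(D)\right) = \bigcup_{a=0}^{s-1} \Gamma\left(R^{t+u}\big(\WEAVE_{a,-a}(C^t,D^u)\big)\right) = \bigcup_{E \in \mathscr{C}} \Gamma\left(R^{t+u}(E)\right).$$
Since $\mathscr{C} = \bigcup_{i=0}^{x-1}\mathscr{C}_i$, this is exactly the claimed identity. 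One should double-check that the $s$ appearing in the Product Theorem's statement coincides with the $s$ of Lemma~\ref{benignB}: both are defined as the smallest positive integer for which there exist $(f_1,g_1),(f_2,g_2) \in H$ with $(f_2,g_2) \sim (f_1+s,g_1-s)$, and the remark after the Product Theorem shows this equals $\frac{\gcd(|C|u,|D|t)}{k}$, so there is no conflict.

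There is essentially no hard mathematical content left — the corollary is a bookkeeping consolidation of Lemma~\ref{benignB} and the Product Theorem. The only mild obstacle is notational: making sure the hypotheses line up ($(t,t+u)$-benign here versus "$C$ is a $(t,k)$-benign cycle" with $k = t+u$ in Lemma~\ref{benignB}, which are the same by the definition of $k$), and confirming that "there exist $x \le u$ multisets" is satisfied by exhibiting exactly $x = \gcd(u,s)$ of them rather than needing some choice. So the proof will read: let $s = \frac{\gcd(|C|u,|D|t)}{k}$ and $x = \gcd(u,s) \le u$; apply Lemma~\ref{benignB} to get the partition into $x$ multisets $\mathscr{C}_i$ satisfying (1) and (2); note $\bigcup \mathscr{C}_i = \{\WEAVE_{a,-a}(C^t,D^u) : a\in[s]\}$; and apply the Product Theorem to obtain (3).
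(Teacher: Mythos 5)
Your proposal is correct and follows essentially the same route as the paper: the paper's proof likewise sets $s = \frac{\gcd(|C|u,|D|t)}{k}$ and $x = \gcd(u,s) \le u$, obtains the partition and $k$-summability from Lemma~\ref{benignB}, and gets condition (3) directly from the Product Theorem applied to $\mathscr{C} = \{\WEAVE_{a,-a}(C^t,D^u) : a \in [s]\}$. Your extra care in reconciling the two definitions of $s$ and the indexing conventions is sound bookkeeping but introduces no new content beyond what the paper does.
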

\begin{proof}
Let $ s = \frac{gcd(|C|u, |D|t)}{k}  $ and $x = \gcd(u,s)$ (note that $\gcd(u,s) \le u$, so $x$ satisfies $x \le u$). By the product theorem, 
$\mathscr{C} =  \big\{\WEAVE_{a,-a}(C^t,D^u): a \in [s] \big\}$ satisfies  $\Gamma \big(R^t(C)\big) \times \Gamma \big(R^{u} (D)\big)=\bigcup_{E \in  \mathscr{C}}\Gamma \left( R^{t+u}(E) \right)$, so this Corollary follows directly from Lemma \ref{benignB}.
\end{proof}

\subsection{Existence of Important Cases}
\begin{rem}
Since $C_i^0 = C_j^0$ for any $i,j$,  any cycle is (1,k)-benign for arbitrary $k$.
\end{rem}
\begin{lem} \label{2benign}
For any $k>3$ and any odd $n \ge 2k-1$, there exists a (2,k)-benign universal cycle on $\left[ \begin{smallmatrix} n \\ 2 \end{smallmatrix} \right]$.
\end{lem}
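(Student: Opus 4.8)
The plan is to build an explicit cycle $C$ on $\left[\begin{smallmatrix} n \\ 2\end{smallmatrix}\right]$ and then exhibit a short repeated $(2{-}1)$-string, i.e.\ a repeated \emph{single symbol}, at two positions whose difference is a multiple of $k$ and whose quotient is coprime to $|C|$. Since a $2$-subset ucycle has length $\binom n2$, this amounts to finding two occurrences of the same symbol $p$ in $C$ at indices $i$ and $i+k\Delta$ with $\gcd(\Delta,\binom n2)=1$. First I would recall the standard construction of a ucycle on $\left[\begin{smallmatrix}n\\2\end{smallmatrix}\right]$ for odd $n$: list the symbols of $[n]$ and take the cycle $0,1,n-1,2,n-2,3,n-3,\dots$, i.e.\ the ``zig-zag'' sequence $C_j$ whose consecutive differences cycle through $\pm1,\pm2,\dots,\pm\frac{n-1}{2}$; because $n$ is odd these $n$ differences used over one period of length $n$ advance the ``base point'' by a full residue system, so repeating the pattern $\frac{n-1}{2}$ times (total length $n\cdot\frac{n-1}{2}=\binom n2$) hits every unordered pair exactly once. (Equivalently one can cite that Jackson/Hurlbert-style $\left[\begin{smallmatrix}n\\2\end{smallmatrix}\right]$ ucycles exist for all odd $n$ and work with any one of them.)

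Next I would locate the repeated symbol. In the zig-zag cycle each symbol of $[n]$ occurs exactly $\frac{n-1}{2}$ times, so for $n\ge 2k-1$ (hence $\frac{n-1}{2}\ge k-1\ge 1$, and in fact $\ge k-1$) there are plenty of occurrences of, say, the symbol $0$. The positions at which a fixed symbol recurs in this construction form an arithmetic-like progression governed by the period structure; I would compute the gaps between consecutive occurrences of $0$ explicitly and show that among the $\binom{(n-1)/2}{2}$ differences of pairs of these occurrence-indices, at least one is of the form $k\Delta$ with $\gcd(\Delta,\binom n2)=1$. The freedom here is substantial: we get to pick \emph{which} symbol and \emph{which two} of its $\frac{n-1}{2}\ge k-1$ occurrences to use, and also we may first rotate $C$, so the set of achievable values of $i+k\Delta - i$ is large. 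The condition $k>3$ and $n\ge 2k-1$ should be exactly what guarantees enough occurrences and enough room for one such gap to land on a multiple of $k$ with coprime quotient.

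The main obstacle I anticipate is the number-theoretic bookkeeping in the last step: showing that the gap structure of a fixed symbol's occurrences necessarily contains a multiple of $k$ whose cofactor is coprime to $\binom n2$. One clean way to force this is to choose the two occurrences so that $\Delta=\pm 1$ or $\Delta$ equal to a prime exceeding $\binom n2$ is impossible, but $\Delta=1$ would require two occurrences of the same symbol exactly $k$ apart; more robustly, I would engineer the construction (choosing a convenient rotation and a convenient symbol) so that two occurrences are exactly $k$ apart, giving $\Delta=1$, which is trivially coprime to everything. Concretely, after one full period the zig-zag returns shifted by a fixed amount, and within two consecutive periods one can track a symbol that reappears after exactly $k$ steps provided $n\ge 2k-1$; verifying that such a symbol exists for every odd $n\ge 2k-1$ and $k>3$ is the crux, and it is a finite per-residue-class check combined with a monotonicity argument in $n$. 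Once $\Delta=1$ (or any unit mod $\binom n2$) is secured, the cycle is $(2,k)$-benign by definition, completing the proof.
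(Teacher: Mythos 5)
Your overall strategy---produce some explicit ucycle on the $2$-subsets of $[n]$ and then exhibit one symbol occurring at two indices exactly $k\Delta$ apart with $\gcd(\Delta,|C|)=1$, ideally $\Delta=1$---is the same goal the paper pursues, but your proposal never actually carries out the step that constitutes the entire content of the lemma. You explicitly defer it: ``verifying that such a symbol exists for every odd $n\ge 2k-1$ and $k>3$ is the crux,'' and what you offer in its place is a hope that a ``finite per-residue-class check combined with a monotonicity argument'' will work. Nothing in the proposal establishes that the zig-zag cycle (or any Jackson/Hurlbert cycle, per your fallback ``work with any one of them'') has two equal symbols at distance exactly $k$, or even at distance $k\Delta$ with $\Delta$ coprime to $\binom{n}{2}$; an arbitrary ucycle carries no such guarantee, and rotating the cycle does not change the multiset of gaps between occurrences of a symbol, so the ``freedom'' you invoke does not obviously buy anything. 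In addition, the zig-zag construction itself is only sketched: the claim that repeating the length-$n$ pattern $\frac{n-1}{2}$ times (with the appropriate shift) covers every pair exactly once is asserted, not proved, so even the existence half of the argument is incomplete as written. As it stands, the proposal is a plan rather than a proof.

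The paper sidesteps the search problem entirely by building the cycle so that the benign pair is present by construction. It decomposes $P_2([n])$ using the difference cycles $D_w(x)$ (the cycle starting at $x$ whose successive symbols increase by $w$ modulo $n$), shows that the family $\{D_w(x)\}$ with $1\le w\le\frac{n-1}{2}$ covers all pairs, and that after removing $D_1(0)$ and $D_{k-1}(0)$ the rest is $2$-summable into a cycle $E$ beginning with $0$ (here $n$ odd and $\frac{n-1}{2}\ge k>3$ guarantee $D_2(0)$ is available and meets everything). It then forms $E'=D_{n-1}(0)\cdot D_{k-1}(0)\cdot E$, replacing $D_1(0)$ by $D_{n-1}(0)$, which covers the same pairs; the payoff is that the symbol $k-1$ sits at index $n-(k-1)$ (inside $D_{n-1}(0)$) and again at index $n+1$ (second symbol of $D_{k-1}(0)$), a gap of exactly $k$, so $\Delta=1$ and $E'$ is $(2,k)$-benign. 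If you want to rescue your version, you would need to do the analogous engineering explicitly---either verify the occurrence-gap arithmetic for your zig-zag cycle in all cases, or, more easily, adopt the idea of assembling the cycle from pieces whose concatenation order you control so that a repeated symbol at distance $k$ is forced.
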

\begin{proof}
For any $w$, let $D_w(x)$ be the cycle which has length $\frac{n}{\gcd(w,n)}$ whose symbols are given by $\big(D_w(x)\big)_i \equiv x + iw \pmod{n}, \big(D_w(x)\big)_i \in [n]$.\footnote{Note that our definition of $\big(D_w(x)\big)_i \equiv x + iw \pmod{n}$ is periodic, with a period exactly equal to the length of $D_w(x)$.}
 Less formally, $D_w(x)$ is the unique cycle which starts at $x$, has symbols taken from $[n]$, obeys the condition that each symbol must be $w$ greater (modulo $n$) than the last, and goes until it loops back to $x$ for the first time. 

Note that all of the $\frac{n}{\gcd(w,n)}$ symbols of $D_w(x)$ are unique, and are actually the symbols in $[n]$ which are equivalent to $x$ modulo $\gcd(w,n)$. Thus, the 2-range of $D_w(x)$ will be the set of strings $ij$ for which $i$ and $j$ are both in $[n]$, $i$ is equivalent to $x$ modulo $\gcd(w,n)$, and $j \equiv i +w \pmod n$.

 Now, let $\mathscr{D}_w = \{ D_w(x) : 0\le x < \gcd(w,n)\}$. We can see that $$\bigcup_{D \in \mathscr{D}_w} R^2(D) = \Big\{ij: i\in [n], j\in [n], j \equiv i + w \pmod {n}  \Big\}.$$
If we also let  $\mathscr{D} = \bigcup_{w=1}^{\frac{n-1}{2}} \mathscr{D}_w,$ then 
$$\bigcup_{D \in \mathscr{D}} \Gamma \left( R^2(D) \right) = \Big\{\{i,j\}: i\in [n], j\in [n], i \ne j \Big\} = P_2\big([n]\big).$$
Let  $\mathscr{D}' = \mathscr{D} - \{ D_{k-1}(0), D_1(0) \}$. Since $\frac{n-1}2 \ge k > 3$, and $k \ne 2$, both $\mathscr{D}$ and $\mathscr{D}'$ will contain  $D_2(0)$. 

Since 2 must be relatively prime to $n$, $D_2(0)$ must contain every symbol in $[n]$, which means its 1-range must intersect with the 1-range of every element of $\mathscr{D}'$. Thus, $\mathscr{D}'$ is 2-summable. \\Let $E$ denote some such 2-summation with a first symbol of `0'.

Since $D_{n-1}(0)^1_0 = D_{k-1}(0)^1_0 = E_0^1 = 0$, we can take their $k$-summation $E' = D_{n-1}(0) \cdot D_{k-1}(0) \cdot E$. But $$\Gamma\Big( R^2 \big( D_{n-1}(0) \big)  \Big) =\Gamma\Big( R^2 \big( D_1(0) \big)  \Big),$$ so 
\begin{align*}
\Gamma\Big( R^2(E')  \Big) 
&= \Gamma\Big( R^2\big( D_{n-1}(0) \big)  \Big) \cup \Gamma\Big( R^2\big( D_{k-1}(0) \big)  \Big) \cup \Gamma\Big( R^2\big(E\big)  \Big)
\\&= \Gamma\Big( R^2\big( D_1(0) \big)  \Big) \cup \Gamma\Big( R^2\big( D_{k-1}(0) \big)  \Big) \cup \Gamma\Big( R^2\big(E\big)  \Big)
\\& = \Gamma \Big(    \bigcup_{D \in \mathscr{D}} \left( R^2(D) \right)    \Big)
\\& = \Big\{\{i,j\}: i\in [n], j\in [n], i \ne j \Big\}.
\end{align*}
Thus, $E'$ is a universal cycle on $\left[ \begin{smallmatrix} n \\ 2 \end{smallmatrix} \right]$.

$E'_{n-(k-1)} = D_{n-1}(0)_{n-(k-1)} = k-1$ and $E'_{n+1} =  D_{k-1}(0)_1 = k-1$. Since $n+1 - (n - (k-1)) = k \cdot 1$ and $1$ is relatively prime to $|E'|$,  $E'$ must be $(2,k)$-benign.
\end{proof}

\section{Proof of the Main Theorem} \label{Main}
In this section, we will finally prove our main theorem:
\begin{named}[Main Theorem]
If $a$ and $b$ are positive multiples of 8 such that  neither $a+1$ nor $b+1$ are divisible by 3, then if there exist  universal cycles on $\left[\begin{smallmatrix} a+2 \\ 4 \end{smallmatrix} \right]$ and $\left[\begin{smallmatrix} b+2 \\ 4 \end{smallmatrix} \right]$, there must exist  universal cycles on $\left[\begin{smallmatrix} a+b+2 \\ 4 \end{smallmatrix} \right]$.
\end{named}

\subsection{Preliminaries}
\begin{lem}\label{Ucycle substitution}
If $C$ is a universal cycle on  $P_k(\mathcal{A})$, $|\mathcal{A}|=|\mathcal{B}|$, $S$ is a $(k+1)$-string consisting of $k+1$ distinct symbols from $\mathcal{B}$, and $x$ is any integer, 
then there exist a cycle $D$ such that
\begin{enumerate}
\item $D^{k+1}_x = S$
\item $D$ is a universal cycle on $P_k(\mathcal{B})$, and
\item if $C$ is $(a,b)$-benign, then so is $D$.
\end{enumerate}
\end{lem}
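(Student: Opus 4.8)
The plan is to build $D$ from $C$ by two operations, each of which manifestly preserves the two properties that matter here — being a universal cycle and being $(a,b)$-benign — namely a \emph{rotation} and an \emph{alphabet relabeling}. Rotation preserves every $k$-range (Section~\ref{definitions}), and it preserves benignity: if $C^{a-1}_i = C^{a-1}_{i+b\Delta}$ with $\Delta$ coprime to $|C|$, then after rotating the same equation holds at a shifted index with the same $\Delta$, and $|C|$ is unchanged. A relabeling $\phi\colon \mathcal A\to\mathcal B$ (a bijection, applied symbol by symbol) sends $R^k(C)$ to $R^k(D)$ block by block and hence carries $\Gamma(R^k(C)) = P_k(\mathcal A)$ onto $P_k(\mathcal B)$, so $D$ is a ucycle on $P_k(\mathcal B)$; and a relabeling preserves all coincidences of substrings, so it preserves benignity as well.

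Thus the proof reduces to arranging that $D^{k+1}_x = S$. I would do this in two steps. First, rotate $C$ so that some window $C^{k+1}_x = T_0T_1\cdots T_k$ consists of $k+1$ \emph{distinct} symbols. Second — noting that the mere existence of the $(k+1)$-string $S$ forces $|\mathcal B| = |\mathcal A|\ge k+1$ — define $\phi(T_j) = S_j$ for $0\le j\le k$ (well-defined and injective on $\{T_0,\dots,T_k\}$ since both tuples have distinct entries) and extend $\phi$ to any bijection $\mathcal A\setminus\{T_0,\dots,T_k\}\to\mathcal B\setminus\{S_0,\dots,S_k\}$, which is possible since these two sets are equinumerous. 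Applying this $\phi$ to the rotated cycle produces $D$ with $D^{k+1}_x = \phi(T) = S$, and by the previous paragraph $D$ is a ucycle on $P_k(\mathcal B)$ and is $(a,b)$-benign whenever $C$ is.

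The one genuinely nontrivial point — and the step I expect to be the main obstacle — is the claim used in the first step: a universal cycle on $P_k(\mathcal A)$ with $|\mathcal A|\ge k+1$ must contain $k+1$ consecutive distinct symbols. The argument I have in mind: in any ucycle both $C^k_j$ and $C^k_{j+1}$ consist of $k$ distinct symbols, so $C^{k+1}_j = C_jC_{j+1}\cdots C_{j+k}$ fails to have $k+1$ distinct symbols only if $C_j = C_{j+k}$. If that held for every $j$, then each $C^k_{j+1}$ would be a rotation of $C^k_j$, so all the windows $C^k_j$ would carry the same symbol-set $M$; then $\Gamma(R^k(C)) = P_k(\mathcal A)$ would be a multiset all of whose members equal $M$, contradicting $\binom{|\mathcal A|}{k}\ge\binom{k+1}{k} > 1$. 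Hence some window of $k+1$ distinct symbols exists, and a rotation moves it to begin at index $x$. Everything else is bookkeeping about how $\Gamma$, $R^k$, and the benign condition behave under bijective relabeling and index shifts.
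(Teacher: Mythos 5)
Your proposal is correct, and its backbone — a bijective relabeling $\phi\colon\mathcal A\to\mathcal B$ pinned so that a window of $k+1$ distinct consecutive symbols maps onto $S$, plus the routine checks that relabeling carries $\Gamma(R^k(C))=P_k(\mathcal A)$ to $P_k(\mathcal B)$ and preserves the benign condition — is exactly the paper's. Where you diverge is in how the distinct window at index $x$ is obtained. You establish only that \emph{some} window of $k+1$ distinct symbols exists (via the contradiction: if $C_j=C_{j+k}$ for all $j$, every $k$-window would carry the same symbol set, which is impossible once $|\mathcal A|\ge k+1$), and then insert a rotation to move that window to position $x$, which obliges you to also verify that rotation preserves ucyclehood and benignity (both checks are fine as you state them). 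The paper instead observes the sharper local fact that \emph{every} window already works: since $C$ is a universal cycle, $\Gamma\big(C^k_x\big)\ne\Gamma\big(C^k_{x+1}\big)$, which together with each of these being a set of $k$ distinct symbols forces $C_x\ne C_{x+k}$, so $C^{k+1}_x$ consists of $k+1$ distinct symbols at the given $x$ with no rotation needed. Your route costs an extra step and proves a weaker intermediate statement, but it is logically sound; the paper's buys a one-line argument and the stronger fact that the hypothesis holds at every index. (Both arguments, yours and the paper's, implicitly use $|\mathcal A|\ge k+1$, which the existence of $S$ guarantees — you make this explicit, which is a small point in your favor.)
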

\begin{proof}
Since $|\mathcal{A}| =|\mathcal{B}|$, we can find a bijection from $\mathcal{A}$ to $\mathcal{B}$. Furthermore, for any distinct $a_1,a_2,\cdots,a_n \in \mathcal{A}$ and distinct $b_1,b_2,\cdots,b_n \in \mathcal{B}$, we can find such a bijection which maps each $a_i$ to $b_i$.

Since $C$ is universal cycle on  $P_k(\mathcal{A})$, $C^k_x$ and $C^k_{x+1}$ must each consist of $k$ different symbols. In addition, since we must have $\Gamma\big( C^k_x \big) \ne \Gamma\big( C^k_{x+1} \big)$, $C_{x} \ne C_{x+k}$, so $C^{k+1}_x$ consists of $k+1$ different characters.

Let $f$ be some bijection from $\mathcal{A}$ to $\mathcal{B}$ which takes $C_{x+i}$ to $S_i$ for every $i \in [k+1]$, and let $D = f(C)$. Then, for $i \in [k+1]$, $D_{x+i} = f(C_{x+i}) = S_i$, so $D^{k+1}_{x} = S$.

In addition, 
\begin{align*}
R^k(D) &= \{D_x^k : 0 \le x \le |D| - 1 \} 
\\&= \{f(C)_x^k : 0 \le x \le |C| - 1 \}
\\&= \{f(C_x^k) : 0 \le x \le |C| - 1 \}
\\&= f(\{C_x^k : 0 \le x \le |C| - 1 \})
\\&= f(P_k(\mathcal{A}))
\\&= P_k(\mathcal{B}).
\end{align*}

Finally, if $C$ is $(a,b)$-benign, then $C^{a-1}_i=C^{a-1}_{i+b\Delta}$, so $$D^{a-1}_i=f(C)^{a-1}_i = f(C^{a-1}_i) =  f(C^{a-1}_{i+b\Delta}) =  f(C)^{a-1}_{i+b\Delta} = D^{a-1}_{i+b\Delta},$$
which shows that $D$ must also be $(a,b)$-benign.
\end{proof}

Let $a$ and $b$ be positive integers for which
\begin{enumerate}
\item Both $a$ and $b$ are divisible by 8,
\item neither $a+1$ nor $b+1$ are divisible by 3, and
\item there exist universal cycles on $\left[\begin{smallmatrix} a+2 \\ 4 \end{smallmatrix} \right]$ and $\left[\begin{smallmatrix} b+2 \\ 4 \end{smallmatrix} \right]$.
\end{enumerate}
\begin{rem}
$a \ge 16$ and $b \ge 16$.
\end{rem}

Let $\mathcal{A}$ and $\mathcal{B}$ be disjoint sets of symbols satisfying $|\mathcal{A}| = a$ and $|\mathcal{B}| = b$. Let $\alpha$ and $\beta$ be distinct symbols not in $\mathcal{A} \cup \mathcal{B}$. 

Let 
\begin{alignat*}{2}
M_0 &= P_4 \left(\mathcal{A} \cup \{\alpha, \beta\} \right)
\\M_1 &= P_3 \left(\mathcal{A} \cup \{\alpha\} \right)& \times &  P_1 \left(\mathcal{B} \right)
\\M_2 &= P_2 \left(\mathcal{A} \cup \{\alpha\} \right)& \times  & P_2 \left(\mathcal{B} \cup \{\beta\} \right)
\\M_3 &=  P_1 \left(\mathcal{A} \right)& \times &  P_3 \left(\mathcal{B} \cup \{\beta\} \right)
\\M_4 &=  & & P_4 \left(\mathcal{B} \cup \{\alpha, \beta\} \right).
\end{alignat*}
\begin{rem}
$$M_0 \cup M_1 \cup M_2 \cup M_3 \cup M_4 = P_4 \left(\mathcal{A} \cup \mathcal{B} \cup \{\alpha, \beta\} \right).$$
\end{rem}

\subsection{Constructing the Component Cycles}
\begin{note}
The properties of cycles constructed in this subsection are summarized in Figure 1.
\end{note}
Since $a+1$ is odd and greater than $7=2\cdot4-1$, by Lemma \ref{2benign} there exists a $(2,4)$-benign universal cycle on $\left[ \begin{smallmatrix} a+1 \\ 2 \end{smallmatrix} \right]$. Thus, by Lemma \ref{Ucycle substitution}, we can find a cycle $C(2)$ which is a $(2,4)$-benign ucycle on $P_2\big(\mathcal{A} \cup \{\alpha\}\big) $ satisfying $\alpha \not \in \Gamma \left( C(2)^3_{1} \right)$. \\By similar reasoning, we can find a ucycle on $P_2\big(\mathcal{B} \cup \{\beta\}\big) $ satisfying $\beta \not \in \Gamma \left( D(2)^3_{-2} \right)$.

For any set $M$, we can obtain a universal cycle on $P_1\big(M\big) $ simply by listing the characters of $M$ in any order. Since $C(2)^3_1$ contains only characters from $\mathcal{A}$, we can find a cycle $C(3)$ which is a universal cycle on $P_1\big(\mathcal{A}\big) $ satisfying $C(3)^3_0 = C(2)_3 C(2)_1 C(2)_2$. 
\\By similar reasoning, we can find a cycle $D(1)$ which is a universal cycle on $P_1\big(\mathcal{B}\big) $ satisfying $D(1)^3_{0} = D(2)_{0} D(2)_{-2} D(2)_{-1}$.

Since $a+1\ge8$ and $a+1$ is not a multiple of 3, by the results of Jackson \cite{J}, there exist universal cycles on $\left[ \begin{smallmatrix} a+1 \\ 3 \end{smallmatrix} \right]$. Thus,  by Lemma \ref{Ucycle substitution}, we can find a cycle $C(1)$ which is a universal cycle on $P_3\big(\mathcal{A}\cup \{\alpha\}\big) $ satisfying $C(1)^4_{-2} = C(2)^4_{0}$, and therefore also satisfying  $C(1)^3_{-2} = C(2)^3_{0}$.
\\By similar reasoning, we can find a cycle $D(3)$ which is a universal cycle on $P_3\big(\mathcal{B}\cup \{\beta\}\big) $ satisfying $D(3)^3_{-2} = D(2)^3_{-1}$.

By assumption, there exist universal cycles on $\left[\begin{smallmatrix} a+2 \\ 4 \end{smallmatrix} \right]$ and $\left[\begin{smallmatrix} b+2 \\ 4 \end{smallmatrix} \right]$. Thus, by Lemma \ref{Ucycle substitution}, we can find a cycle $C(0)$ which is  a universal cycle on $P_4\big(\mathcal{A}\cup \{\alpha,\beta\}\big) $ satisfying $C(0)^4_{-1} = C(1)^4_{-1}$.
\\By similar reasoning, we can find a cycle  $D(4)$ which is  a universal cycle on $P_4\big(\mathcal{B}\cup \{\alpha,\beta\}\big) $ satisfying $D(4)^4_0 = D(3)^4_{-1}$.

\begin{rem}
$|C(1)|,|C(2)|,|C(3)|,|D(1)|,|D(2)|,$ and $|D(3)|$ are each divisible by 4.
\end{rem}

\begin{rem}
$$\begin{alignedat}{2}
M_0 &= \Gamma \left( R^4 \left(   C(0)   \right) \right)
\\M_1 &= \Gamma \left( R^3 \left(   C(1)   \right) \right)& \times & \Gamma \left( R^1 \left(   D(1)   \right) \right)
\\M_2 &= \Gamma \left( R^2 \left(   C(2)   \right) \right)& \times &\Gamma \left( R^2 \left(   D(2)   \right) \right)
\\M_3 &= \Gamma \left( R^1 \left(   C(3)   \right) \right)& \times &\Gamma \left( R^3 \left(   D(3)   \right) \right)
\\M_4 &=  & & \Gamma \left( R^4 \left(   D(4)   \right) \right).
\end{alignedat}
$$
\end{rem}

\begin{figure}[t]
\begin{align}
&C(3)^3_0 = C(2)_3 C(2)_1 C(2)_2,\textnormal{ so}  & C(3)_{0} &=  C(2)_3
\\ &C(3)^3_0 = C(2)_3 C(2)_1 C(2)_2,\textnormal{ so}  & C(3)_{1} &=  C(2)_1
\\ &C(3)^3_0 = C(2)_3 C(2)_1 C(2)_2,\textnormal{ so}  & C(3)_{2} &=  C(2)_2
\\\notag
\\ &C(1)^3_{-2} =  C(2)^3_{0},\textnormal{ so}  & C(1)^2_{-2} &=  C(2)^2_{0}
\\ &C(1)^3_{-2} =  C(2)^3_{0},\textnormal{ so}  & C(1)^2_{-1} &=  C(2)^2_{1}
\\\notag
\\&C(0)^4_{-1} = C(1)^4_{-1},\textnormal{ so} & C(0)^3_{-1} &= C(1)^3_{-1}
\\&C(0)^4_{-1} = C(1)^4_{-1},\textnormal{ so} & C(0)^3_0 &= C(1)^3_0
\\\notag
\\&D(1)^3_{0} = D(2)_{0} D(2)_{-2} D(2)_{-1},\textnormal{ so}  & D(1)_{0} &= D(2)_0
\\&D(1)^3_{0} = D(2)_{0} D(2)_{-2} D(2)_{-1},\textnormal{ so}  & D(1)_{1} &= D(2)_{-2}
\\&D(1)^3_{0} = D(2)_{0} D(2)_{-2} D(2)_{-1},\textnormal{ so}  & D(1)_{2} &= D(2)_{-1}
\\\notag
\\& D(3)^3_{-2} = D(2)^3_{-1},\textnormal{ so}  & D(3)^2_{-2} &= D(2)^2_{-1}
\\& D(3)^3_{-2} = D(2)^3_{-1},\textnormal{ so}  & D(3)^2_{-1} &= D(2)^2_{0}
\\\notag
\\&D(4)^4_0 = D(3)^4_{-1},\textnormal{ so} & D(4)^3_0 &= D(3)^3_{-1}
\\&D(4)^4_0 = D(3)^4_{-1},\textnormal{ so} & D(4)^3_1 &= D(3)^3_{0}
\end{align}
\vspace{-24pt}\caption{Summary of what we know by construction}

\end{figure}

\subsection{Fitting Everything Together}
Let us define 
\begin{gather*}
\begin{aligned}
E_i(1) = \WEAVE_{i,-i}\big(D(1)^1,C(1)^3\big),
\\E_i(2)= \WEAVE_{i,-i}\big(C(2)^2,D(2)^2\big),
\\E_i(3)= \WEAVE_{i,-i}\big(C(3)^1,D(3)^3\big),
\end{aligned}
\\
\begin{aligned}
\mathscr{H}(1) &= \{E_0(1), E_1(1), E_2(1)\},
\\\mathscr{H}(2) &= \{E_0(2), E_1(2)\} \textnormal{, and }
\\\mathscr{H}(3) &= \{E_0(3), E_1(3), E_2(3)\}.
\end{aligned}
\end{gather*}

\begin{lem}\label{itFits}
$\{C(0), D(4)\} \cup \mathscr{H}(1) \cup \mathscr{H}(2) \cup \mathscr{H}(3) $ is 4-summable.
\end{lem}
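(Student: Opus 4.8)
The plan is to reduce the assertion to graph connectivity using the characterization of $4$-summability from Section~\ref{Cycle Addition}: a set $\mathscr{C}$ of cycles is $4$-summable exactly when the graph on $\mathscr{C}$ joining two cycles whenever their $3$-ranges intersect is connected (recall that intersecting $(k-1)$-ranges is precisely what is needed to produce a $k$-sum). Writing $\mathscr{V}=\{C(0),D(4)\}\cup\mathscr{H}(1)\cup\mathscr{H}(2)\cup\mathscr{H}(3)$, I will therefore exhibit, for enough pairs of cycles in $\mathscr{V}$, an explicit common length-$3$ string, and check that the resulting graph on $\mathscr{V}$ is connected. (Since $\mathscr{V}$ is a multiset some of the $E_i(\cdot)$ may coincide up to rotation, but this only adds edges — rotations share their $3$-ranges — so it is harmless.)

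Four edges read off Figure~1 directly. From $C(0)^3_0=C(1)^3_0$ and $C(0)^3_{-1}=C(1)^3_{-1}$, together with the computations $\big(E_0(1)\big)^3_{1}=C(1)^3_0$ and $\big(E_1(1)\big)^3_{1}=C(1)^3_{-1}$ (immediate from the formula for the symbols of a WEAVE in Section~\ref{Cycle Multiplication}), $C(0)$ is adjacent to $E_0(1)$ and to $E_1(1)$. Symmetrically, $D(4)^3_1=D(3)^3_0=\big(E_0(3)\big)^3_{1}$ and $D(4)^3_0=D(3)^3_{-1}=\big(E_1(3)\big)^3_{1}$, so $D(4)$ is adjacent to $E_0(3)$ and to $E_1(3)$.

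Five further edges come from length-$3$ windows of a WEAVE that straddle the boundary between its two factors. Substituting the appropriate indices into the symbol formula of Section~\ref{Cycle Multiplication} and then invoking the relevant lines of Figure~1, one verifies
\begin{align*}
\big(E_0(1)\big)^3_{-2}&=C(1)_{-2}C(1)_{-1}D(1)_0=C(2)_0C(2)_1D(2)_0=\big(E_0(2)\big)^3_{0},\\
\big(E_1(1)\big)^3_{0}&=D(1)_1C(1)_{-1}C(1)_0=D(2)_{-2}C(2)_1C(2)_2=\big(E_1(2)\big)^3_{-1},\\
\big(E_2(1)\big)^3_{0}&=D(1)_2C(1)_{-2}C(1)_{-1}=D(2)_{-1}C(2)_0C(2)_1=\big(E_0(2)\big)^3_{-1},\\
\big(E_2(3)\big)^3_{0}&=C(3)_2D(3)_{-2}D(3)_{-1}=C(2)_2D(2)_{-1}D(2)_0=\big(E_1(2)\big)^3_{1},\\
\big(E_0(3)\big)^3_{-2}&=D(3)_{-2}D(3)_{-1}C(3)_0=D(2)_{-1}D(2)_0C(2)_3=\big(E_1(2)\big)^3_{2},
\end{align*}
which yields the adjacencies $E_0(1)\sim E_0(2)$, $E_1(1)\sim E_1(2)$, $E_2(1)\sim E_0(2)$, $E_2(3)\sim E_1(2)$, and $E_0(3)\sim E_1(2)$.

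Finally I check connectivity: from $C(0)$ one reaches $E_0(1)$ and $E_1(1)$; from $E_0(1)$ one reaches $E_0(2)$ and then $E_2(1)$; from $E_1(1)$ one reaches $E_1(2)$ and then $E_2(3)$ and $E_0(3)$; and from $E_0(3)$ one reaches $D(4)$ and then $E_1(3)$ — so all ten cycles of $\mathscr{V}$ lie in a single component, and $\mathscr{V}$ is $4$-summable. The only genuine labor is verifying the nine window identities above: keeping straight which argument of each WEAVE plays the length-$t$ role versus the length-$u$ role, handling the negative indices, and selecting the correct line of Figure~1. I expect this index bookkeeping to be the main (entirely mechanical) obstacle; note that, reassuringly, none of the identities depends on the lengths of the component cycles, so no case analysis in $a$ or $b$ is needed.
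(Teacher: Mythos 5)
Your proposal is correct and takes essentially the same approach as the paper: both arguments exhibit explicit shared length-$3$ windows (computed from the WEAVE symbol formula plus the identities in Figure~1) among the ten cycles and conclude $4$-summability from the connectivity criterion of Section~3, and eight of your nine edges are exactly the paper's. The only (harmless) difference is that you link $E_1(3)$ to $D(4)$ via Figure~1(13), where the paper instead links $E_1(3)$ to $E_0(2)$ via Figure~1(2,12).
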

\begin{proof}
First,
\begin{align*}
C(0)^3_{-1} &= C(1)^3_{-1} = E_1(1)^3_1 & \textnormal{By } Fig.1(6)
\\C(0)^3_0 &= C(1)^3_0 = E_0(1)^3_1 & \textnormal{By } Fig.1(7).
\end{align*}
Thus, $\{ C(0), E_0(1), E_1(1)  \}$ must be 4-summable.
\begin{align*}
&E_0(1)^3_{-2} = C(1)^2_{-2} D(1)_{0} = C(2)^2_0 D(2)_0 =  E_0(2)^3_{0} & \textnormal{By } Fig.1(4,8)
\\&E_2(1)^3_{0} = D(1)_{2}C(1)^2_{-2}  = D(2)_{-1}C(2)^2_0 =  E_0(2)^3_{-1}& \textnormal{By } Fig.1(4,10),
\end{align*}
so $\{ C(0),E_0(2)\} \cup \mathscr{H}(1)$ must be 4-summable.
\begin{align*}
&E_1(1)^3_{0} = D(1)_{1}C(1)^2_{-1} = D(2)_{-2} C(2)^2_{1} = E_1(2)^3_{-1}&  \textnormal{By } Fig.1(5,9)
\\&E_0(3)^3_{-2} = D(3)^2_{-2}C(3)_{0} = D(2)^2_{-1} C(2)_{3} = E_1(2)^3_{2}& \textnormal{By } Fig.1(1,11)
\\&E_2(3)^3_{0} = C(3)_{2}D(3)^2_{-2} = C(2)_{2} D(2)^2_{-1}  = E_1(2)^3_{1}& \textnormal{By } Fig.1(3,11)
\\&E_1(3)^3_{0} = C(3)_{1}D(3)^2_{-1} = C(2)_{1} D(2)^2_{0}  = E_0(2)^3_{1}& \textnormal{By } Fig.1(2,12),
\end{align*}
so $\{ C(0)\} \cup \mathscr{H}(1) \cup \mathscr{H}(2) \cup \mathscr{H}(3)$ must be 4-summable.

Finally, 
\begin{align*}
&D(4)^3_1 = D(3)^3_0 = E_0(3)^3_1  &\textnormal{By } Fig.1(14).
\end{align*}
Thus, $\{C(0), D(4)\} \cup \mathscr{H}(1) \cup \mathscr{H}(2) \cup \mathscr{H}(3) $ must be 4-summable.
\end{proof}

\begin{named}[Main Theorem]
If $a$ and $b$ are positive multiples of 8 such that  neither $a+1$ nor $b+1$ are divisible by 3, then if there exist  universal cycles on $\left[\begin{smallmatrix} a+2 \\ 4 \end{smallmatrix} \right]$ and $\left[\begin{smallmatrix} b+2 \\ 4 \end{smallmatrix} \right]$, there must exist  universal cycles on $\left[\begin{smallmatrix} a+b+2 \\ 4 \end{smallmatrix} \right]$.
\end{named}

\begin{proof}

$D(1)$ is $(1,4)$-benign (trivially), so by Corollary \ref{benignC}\footnote{Note that in this application of the corollary, $D(1)$ takes on the role of $C$, and $C(1)$ takes on the role of $D$, despite the notational mismatch. } we can find $x \le 3$ multisets $\mathscr{C}(1)_i$ such that 
\begin{enumerate}
\item each $\mathscr{C}(1)_i$ contains an element of $\mathscr{H}(1)$,
\item each element of $\mathscr{H}(1)$ is contained in one of the $\mathscr{C}(1)_i$, 
\item each  $\mathscr{C}(1)_i$ is 4-summable, and
\item If we let $\mathscr{C}(1) = \bigcup_{i=0}^{x-1}\mathscr{C}(1)_i$, 
$$\bigcup_{E \in  \mathscr{C}(1)}\Gamma \left( R^4(E) \right) = \Gamma \Big(R^1(D(1))\Big) \times \Gamma \Big(R^{3} (C(1))\Big) = M_1.$$
\end{enumerate}
Since $\{ C(0), D(4)\} \cup \mathscr{H}(1) \cup \mathscr{H}(2) \cup \mathscr{H}(3)$ is 4-summable by Lemma \ref{itFits}, properties 1, 2, and 3 above imply that 
$\{ C(0), D(4)\} \cup \mathscr{H}(2) \cup \mathscr{H}(3) \cup \mathscr{C} (1)$ is 4-summable.

$C(3)$ is $(1,4)$-benign (trivially), so by Corollary \ref{benignC} we can find $x \le 3$ multisets $\mathscr{C}(3)_i$ such that 
\begin{enumerate}
\item each $\mathscr{C}(3)_i$ contains an element of $\mathscr{H}(3)$,
\item each element of $\mathscr{H}(3)$ is contained in one of the $\mathscr{C}(3)_i$, 
\item each  $\mathscr{C}(3)_i$ is 4-summable, and
\item If we let $\mathscr{C}(3) = \bigcup_{i=0}^{x-1}\mathscr{C}(3)_i$, 
$$\bigcup_{E \in  \mathscr{C}(3)}\Gamma \left( R^4(E) \right) = \Gamma \Big(R^1(C(3))\Big) \times \Gamma \Big(R^{3} (D(3))\Big) = M_3.$$
\end{enumerate}
Since $\{ C(0), D(4)\} \cup \mathscr{H}(2) \cup \mathscr{H}(3) \cup \mathscr{C} (1)$  is 4-summable, properties 1, 2, and 3 above imply that 
$\{ C(0), D(4)\} \cup \mathscr{H}(2) \cup \mathscr{C}(1) \cup \mathscr{C} (3)$  is 4-summable.

\ \\$C(2)$ is $(2,4)$-benign by construction, so by Corollary \ref{benignC} we can find $x \le 2$ multisets $\mathscr{C}(2)_i$ such that 
\begin{enumerate}
\item each $\mathscr{C}(2)_i$ contains an element of $\mathscr{H}(2)$,
\item each element of $\mathscr{H}(2)$ is contained in one of the $\mathscr{C}(2)_i$, 
\item each  $\mathscr{C}(2)_i$ is 4-summable, and
\item If we let $\mathscr{C}(2) = \bigcup_{i=0}^{x-1}\mathscr{C}(2)_i$, 
$$\bigcup_{E \in  \mathscr{C}(2)}\Gamma \left( R^4(E) \right) = \Gamma \Big(R^2(C(2))\Big) \times \Gamma \Big(R^{2} (D(2))\Big) = M_2.$$
\end{enumerate}
Since $\{ C(0)\} \cup \mathscr{H}(2) \cup \mathscr{C}(1) \cup \mathscr{C} (3)$ is 4-summable, properties 1, 2, and 3 above imply that 
$\{ C(0), D(4)\} \cup \mathscr{C}(1) \cup \mathscr{C}(2) \cup \mathscr{C} (3)$  is 4-summable.

Let $\mathscr{C} = \{ C(0), D(4)\} \cup \mathscr{C}(1) \cup \mathscr{C}(2) \cup \mathscr{C} (3)$.
$$\bigcup_{E \in  \mathscr{C}}\Gamma \left( R^4(E) \right) = M_0 \cup M_4 \cup M_1 \cup M_2 \cup M_3 = P_4 \left(\mathcal{A} \cup \mathcal{B} \cup \{\alpha, \beta\} \right).$$
Since $\mathscr{C}$ is 4-summable, there must exist a cycle $X$ whose 4-range is the union of the 4-ranges of the elements of $\mathscr{C}$, which means
$$\Gamma \left( R^4(X) \right) = P_{4}\big(\mathcal{A} \cup \mathcal{B} \cup \{\alpha, \beta\} \big).$$
Thus, $X$ is a universal cycle on  $P_4\left(\mathcal{A} \cup \mathcal{B} \cup \{\alpha, \beta\} \right)$.
\\Since $\left|\left(\mathcal{A} \cup \mathcal{B} \cup \{\alpha, \beta\} \right)\right| = a+b+2$, and a universal cycle on $\left[\begin{smallmatrix} a+b+2 \\ 4 \end{smallmatrix} \right]$ exists if and only if a universal cycle on  $P_4\left(\mathcal{A} \cup \mathcal{B} \cup \{\alpha, \beta\} \right)$ exists, there must exist a universal cycle on $\left[\begin{smallmatrix} a+b+2 \\ 4 \end{smallmatrix} \right]$.
\end{proof}

\begin{named}[Corollary to Main Theorem]
As long as we can find universal cycles on $\left[\begin{smallmatrix} 18 \\ 4 \end{smallmatrix} \right]$ and $\left[\begin{smallmatrix} 26 \\ 4 \end{smallmatrix} \right]$, we can find universal cycles on $4$-subsets on $\left[\begin{smallmatrix} n \\ 4 \end{smallmatrix} \right]$ for any $n = 2 \pmod 8$ satisfying $n\ge18$.
\end{named}
\begin{proof}
Let us assume that there exist universal cycles on $\left[\begin{smallmatrix} 18 \\ 4 \end{smallmatrix} \right]$ and $\left[\begin{smallmatrix} 26 \\ 4 \end{smallmatrix} \right]$.
Since 16 is a multiple of 8 and is not equivalent to $2 \pmod 3$, by the Main Theorem, there must exist a universal cycle on $\left[\begin{smallmatrix} 16+16+2 \\ 4 \end{smallmatrix} \right] = \left[\begin{smallmatrix} 34 \\ 4 \end{smallmatrix} \right]$. Thus, we know that any $i \in \{2,3,4\}$, there exists a universal cycle on $\left[\begin{smallmatrix} 8i+2 \\ 4 \end{smallmatrix} \right]$. From here, we proceed by induction on $i$. 

Let us assume that $x\ge4$ and for any $i$ satisfying $2\le i \le x$, there exists a universal cycle on $\left[\begin{smallmatrix} 8i+2 \\ 4 \end{smallmatrix} \right]$.

If $x \equiv 2 \pmod{3}$, $8\cdot(x-2)+1$ is not  divisible by 3. Since $24+1$ is not divisible by 3 and there exist universal cycles on  $\left[\begin{smallmatrix} 8(x-2)+2 \\ 4 \end{smallmatrix} \right]$ and  $\left[\begin{smallmatrix} 24+2 \\ 4 \end{smallmatrix} \right]$, there must exist a  universal cycle on $\left[\begin{smallmatrix} 8(x-2)+24+2 \\ 4 \end{smallmatrix} \right] = \left[\begin{smallmatrix} 8(x+1)+2 \\ 4 \end{smallmatrix} \right] $.

If $x \not \equiv 2 \pmod{3}$, $8\cdot(x-1)+1$ is not  divisible by 3. Since $16+1$ is not divisible by 3 and there exist universal cycles on  $\left[\begin{smallmatrix} 8(x-1)+2 \\ 4 \end{smallmatrix} \right]$ and  $\left[\begin{smallmatrix} 16+2 \\ 4 \end{smallmatrix} \right]$, there must exist a  universal cycle on $\left[\begin{smallmatrix} 8(x-1)+16+2 \\ 4 \end{smallmatrix} \right] = \left[\begin{smallmatrix} 8(x+1)+2 \\ 4 \end{smallmatrix} \right] $.

Thus, by induction, for any $i \ge 2$, there exists a universal cycle on $\left[\begin{smallmatrix} 8i+2 \\ 4 \end{smallmatrix} \right]$.
\end{proof}

\section{Future Directions}

\subsection{The $k=5$ case}
In this paper, we have demonstrated several methods of fitting together small cycles to make larger ones. These methods allowed us to prove our Main Theorem, but they are not limited to this application. For example, they could be used to make significant inroads on the $k=5$ case. In particular, we could show:

\begin{named}[Conjecture] For any $i \in \{  1,2,3,4 \}$, if $a$ and $b$ are sufficiently large multiples of 5 and satisfy certain other divisibility conditions\footnote{These conditions would be the analogues to the Main Theorem's condition that neither $a+1$ nor $b+1$ are divisible by 3, arising partially from the necessity of finding the smaller universal cycles we use, and partially from the fact that we can only weave together cycles whose lengths are multiples of $k$. The conditions will depend on both $i$ and how we fit the cycles together (namely, what we chose to be the analogues to $M_0, M_1, M_2, M_3$ and $M_4$). },
 then if there exist universal cycles on $\left[\begin{smallmatrix} a+i \\ 5 \end{smallmatrix} \right]$ and $\left[\begin{smallmatrix} b+i \\ 5 \end{smallmatrix} \right]$, there must exist universal cycles on $\left[\begin{smallmatrix} a+b+i \\ 5 \end{smallmatrix} \right]$.
\end{named}
This result could be achieved entirely with the tools presented in Sections 2 through \ref{pre-last} by modifying Section \ref{Main} to use slightly different component cycles. Unfortunately, the divisibility conditions on $a$ and $b$ would limit $a+b+i$ to even values, so even with the correct base cases, this would only solve the problem of finding universal cycles on $\left[\begin{smallmatrix} n \\ 5 \end{smallmatrix} \right]$ for even $n$. Of course, it is quite possible that other approaches might yield less restricted results.

\subsection{The $k>5$ cases}
When $k>5$, our approach runs into a difficulty. Recall that in the proof of the Main Theorem, we used the fact that $C(3)$ and $D(1)$ were $(1,4)$-benign and $C(2)$ was $(2,4)$-benign. For the $k=5$ case, we would similarly have two component cycles which were $(1,5)$-benign and two which were $(2,5)$-benign. But for the $k=6$ case, this approach would require a 
component cycle which was $(3,k)$-benign; a case our construction does not extend to.

To resolve this issue, we would need to prove the existence of $(3,k)$-benign universal cycles on  $\left[\begin{smallmatrix} n \\ 3 \end{smallmatrix} \right]$ for various $n$. To make further inroads on the $k>7$ cases, we would need to prove the existence of $(4,k)$-benign universal cycles on  $\left[\begin{smallmatrix} n \\ 4 \end{smallmatrix} \right]$, and so on. We suspect that this may be possible to do by modifying the existence proofs in \cite{H} or \cite {J} to conform to this benignity condition, which would allow us to apply our methods to higher $k$.

\subsection{Generalizing Weaves}
In this paper, we describe a method of ``multiplying" two cycles. A natural question would be whether it is possible to similarly multiply three or more cycles, and indeed there is. If we have $x$ cycles $C(1), C(2), \cdots, C(x)$ such that $|C(i)|$ is a multiple of  $k = t(1) + t(2) + \cdots + t(x)$ for any $i$, then we can create a Weave of these cycles by taking $t(1)$ symbols from $C(1)$, $t(2)$ symbols from $C(2)$, $t(3)$ symbols from $C(3)$, and continue in this fashion (returning to $C(1)$ after taking symbols from $C(x)$) until adding the symbols from $C(x)$ returns us to the place we started in each of the $C(i)$. Interestingly enough, the ``divisibility by $k$" condition is sufficient for the following generalization of the Product Theorem to hold:

\begin{named}[Generalized Product Theorem (proof omitted)\footnote{A proof of this is quite similar to our proof of the Product Theorem.}] 
Let $C(i)$ for $i\in \{1,2, \cdots ,x  \}$ be cycles such that  $k = t(1) + t(2) + \cdots + t(x)$ divides $|C(i)|$ for each $i$. Then, there is a set A of $x$-tuples such that 
$$\begin{aligned}
&\Gamma \left(R^{t(1)}(C(1))\right) \times \Gamma \left(R^{t(2)}(C(2))\right) \times \cdots \times  \Gamma \left(R^{t(x)}(C(x))\right)
\\& = \bigcup_{(a(1),\cdots,a(x)) \in A} \Gamma\left(R^{k}\left( \WEAVE_{a(1), \cdots, a(x)}(C(3)^{t(1)},\cdots,C(x)^{t(x)}) \right)\right).
 \end{aligned}
$$
\end{named}
Although this result was not necessary for the $k=4$ case, it greatly expands the options we have for expressing a cycle as a sum of products of cycles - some of which may yield additional progress on the  Chung, Diaconis, and Graham conjecture.

\subsection{Universal Cycles on other Combinatorial Families}
Although we have focused on the problem of finding universal cycles on $k$-subsets of $n$-sets, our methods can also be applied to finding universal cycles on other combinatorial families. For instance, they could be used to finding universal cycles on $k$-multisets on $n$-sets, a problem studied by Hurlbert, Johnson, and Zahl in \cite{Mult}. In fact, the Product Theorem would be applicable to any combinatorial family which consisted of some subset of the $k$-multisets on an $n$-set, an example being the $k$-multisets containing exactly $k'$ distinct symbols.

\section{Acknowledgments}
We thank Anant Godbole, whose supervision and support has made this work possible. We also thank Sam Hopkins for his thorough reading of this paper, and Bradley Jackson for showing us his work on the subject.

This research was supported by NSF Grant 1004624.

\vspace*{-4.5em}
\renewcommand{\refname}{\section{References}\vspace*{-1em}}

\end{document}